\documentclass[11pt,reqno]{amsart}

\usepackage{amsbsy,amsfonts,amsmath,amssymb,amscd,amsthm,mathrsfs,verbatim,calc}
\usepackage{graphicx,epsfig,color,tikz}

\usepackage{enumitem}
\usepackage[a4paper,text={6.1in,8in},centering,includefoot,foot=1in]{geometry}
\usepackage[colorlinks=true,hypertexnames=false,            linkcolor=blue,citecolor=green]{hyperref}

\small\normalsize
\theoremstyle{plain}

\newtheorem{theorem}{Theorem}[section]
\newtheorem{lemma}[theorem]{Lemma}

\newtheorem{proposition}[theorem]{Proposition}
\newtheorem{corollary}[theorem]{Corollary}

\theoremstyle{definition}

\newtheorem{remark}[theorem]{Remark}

\numberwithin{equation}{section}
\let\oldmarginpar\marginpar
\renewcommand\marginpar[1]{\-\oldmarginpar[\raggedleft\footnotesize \textcolor{red}{#1}]{\raggedright\footnotesize\textcolor{red}{#1}}}

\DeclareMathOperator{\diam}{diam}

\begin{document}
\title[Bounds for (strong) Roman $k$-dominations]{Bounds for (strong) Roman $k$-dominations}

\author[F. Khosh-Ahang]{Fahimeh Khosh-Ahang Ghasr}
\address{{Department of Mathematics, Ilam University,
P.O.Box 69315-516, Ilam, Iran.}}
\email{f.khoshahang@ilam.ac.ir and fahime$_{-}$khosh@yahoo.com}
\begin{abstract}
Motivated by resource defense models in networks, such as protecting territories with varying legion strengths, let $k \geq 2$ be an integer. Roman $k$-domination and strong Roman $k$-domination generalize Roman, double Roman, Italian, and double Italian domination to arbitrary number of legions. The main goal of this note is establishing sharp upper bounds for the Roman and strong Roman $k$-domination numbers of connected graphs. These bounds unify and extend prior results for $k=2$ and $k=3$. We also precisely characterize the graphs achieving these bounds.

%sina googooly yadet bashe 1403\9\18
%man sina hastam gheily naz hastam 1404\3\7
%maman 10goololo ast
\end{abstract}

\subjclass[2010]{05C69}

%	  05E40   Combinatorial aspects of commutative algebra
%     05E45   Combinatorial aspects of simplicial complexes
%     05C75   Structural characterization of families of graphs
%     05C25   graphs and abstract algebra (groups, rings, ...)
%     05C50   graphs and linear algebra
%     05C40   connectivity
%     05C69   vertex subsets with special properties (dominating sets, independent sets, cliques , etc.)

%	  06A11   Algebraic aspects of posets

%     11C20   Matrices , determinants in number theory
%     11C08   polynomials and matrices

%     13P10   Grobner basis other bases for ideals and modules
%     13C14   Cohen-macaulay modules
%	  13H10   Special types (Cohen-Macaulay, Gorenstein, Buchsbaum, etc.)
%	  13D02   Syzygies, resolutions, complexes
%	  13A02   Graded rings
%  	  13F20   Polynomial rings and ideals; rings of integer-valued polynomials
%	  13A18   Valuations and their generalizations
%     13C40   linkage, complete intersection and determinantal ideal

%	  14M25   Toric varieties, Newton polyhedra [See also 52B20]

%	  16S36   Ordinary and skew polynomial rings and semigroup rings

\keywords{Roman domination, Italian domination, Roman $k$-domination, Strong Roman $k$-domination.}

\maketitle
\setcounter{tocdepth}{1}
%\tableofcontents

\section{Introduction}

Throughout, let $k$ be an integer with $k\geq 2$ and $G = (V, E)$ a finite connected simple graph of order $n$. For a vertex $v$ in $V$, denote its open neighbourhood by $N_G(v)$ and closed neighbourhood by $N_G[v] = \{v\} \cup N_G(v)$, with \textbf{degree} $\deg v = |N_G(v)|$. For a subset $S$ of $V$, $G\setminus S$ is a graph obtained from $G$ by removing all vertices in $S$ and their incident edges. If $S=\{v\}$, we denote $G\setminus S$ by $G\setminus v$.

A subset $S$ of $V$ is a \textbf{dominating set} if $V = \bigcup_{v \in S} N_G[v]$, with the \textbf{domination number} $\gamma(G)$ of $G$ being the minimum cardinality of such sets in $G$. Domination theory traces back to the mid-19th century and has evolved extensively~\cite{HHS}. Roman domination and its variants (e.g., Italian, double Roman, double Italian, perfect, weak, etc.) emerged in the late 20th century and continue to develop~\cite{double,Chellali,perfect,Henning+hedetniemi,Klostermeyer2,Li,MV,Stewart,V}; see surveys~\cite{Chellali2,Chellali3,Klostermeyer2,Klostermeyer1}. These concepts model scenarios like defending territories with legions, as Stewart's analogy to the Roman Empire~\cite{Stewart}, where vertices represent locations and legions represent resources with allocation constraints.

In~\cite{Fahimeh}, (strong) Roman $k$-domination generalizes these concepts, allowing up to $k$ legions per vertex to model flexible resource allocation in critical networks. It is worth noting that the literature already contains work on domination parameters involving an integer $k$. In particular  \cite{triple,khalili,MV} introduce and study a notion called $[k]$-Roman domination. Although the notions resembles that of \cite{Fahimeh}, the underlying definitions differ substantially: $[k]$-Roman domination restricts the labels to a $k$-element
set and imposes a different domination condition, whereas the current paper
follows the generalization introduced in \cite{Fahimeh}, allowing weights up to $k$ and
imposing neighbourhood-sum conditions based on $k$. These represent two
distinct and non-equivalent extensions of classical Roman domination. To
avoid confusion for readers we just recall definition of (strong) Roman $k$-domination from \cite{Fahimeh}:

For the finite simple graph $G$, a function $f: V \to \{0, 1, \dots, k\}$ assigns a \textbf{weight} $f(v)$ to each vertex $v$ in $V$. Let $V_i = \{v \in V \mid f(v) = i\}$ for $i \in \{0, 1, \dots, k\}$. Then $f$ can be denoted as $f = (V_0, V_1, \dots, V_k)$. The \textbf{weight} of $f$ is $w(f) = \sum_{v \in V} f(v)$. The function $f$ is a \textbf{Roman $k$-dominating function} ($k$-RDF) of $G$ if, for every $u \in V$ with $f(u) < k/2$, we have $\sum_{v \in N_G[u]} f(v) \geq k$. It is a \textbf{strong Roman $k$-dominating function} ($k$-SRDF) of $G$ if, for every $u \in V$ with $f(u) <k/2$, we have $f(u) + \sum_{v \in N_G(u), f(v) > k/2} f(v) \geq k$. The \textbf{Roman $k$-domination number} is
$$\gamma_k(G) = \min \{ w(f) \mid f \text{ is a } k\text{-RDF of } G \},$$
and the \textbf{strong Roman $k$-domination number} is
$$\gamma_k^s(G) = \min \{ w(f) \mid f \text{ is a } k\text{-SRDF of } G \}.$$
A $k$-RDF (resp.\ $k$-SRDF) $f$ of $G$ with $w(f) = \gamma_k(G)$ (resp.\ $w(f) = \gamma_k^s(G)$) is called a $\gamma_k(G)$-function (resp.\ $\gamma_k^s(G)$-function). Notably:
\begin{itemize}
\item Roman 2-domination equals Italian domination~\cite{Chellali},
\item Strong Roman 2-domination equals Roman domination~\cite{coc},
\item Roman 3-domination equals double Italian domination~\cite{MV},
\item Strong Roman 3-domination equals double Roman domination~\cite{double}.
\end{itemize}
Thus, studying Roman $k$-domination for arbitrary integers $k \geq 2$ unifies prior work and provides new insights into parity-dependent bounds (arising from legion indivisibility in odd $k$ cases).
Upper bounds for $\gamma_k(G)$ and $\gamma_k^s(G)$ when $k\in \{2,3\}$  appear in~\cite{double,chambers,HHV,Klos}, with extremal graphs characterized.
This paper derives unified sharp upper bounds for $\gamma_k(G)$ and $\gamma_k^s(G)$ for arbitrary $k\geq 2$ when $n\geq 3$ providing new bounds for $k\geq 4$, that extend these results, as summarized in the table below for comparison.
\begin{table}[htbp]
  \centering
  \begin{tabular}{c|c|c}
    \hline
    % after \\: \hline or \cline{col1-col2} \cline{col3-col4} ...
    $k$ & Bound for $\gamma_k(G)$ & Bound for $\gamma_k^s(G)$ \\
    \hline
    2  & $3n/4$ (\cite{HHV,Klos}) & $4n/5$ (\cite{chambers})\\
    3 & $5n/4$ & $5n/4$ (\cite{double})\\
    4,6  & $3kn/8$ & $2kn/5$\\
    Even, $\geq 8$ & $3kn/8$ & $3kn/8$ \\
    Odd, $\geq 5$ & $(3k+1)n/8$ & $(3k+1)n/8$\\
    \hline
  \end{tabular}
  \vspace{2mm}
    \caption{Comparision of bounds for $\gamma_k(G)$ and $\gamma_k^s(G)$}\label{table}
\end{table}

For connected simple graphs $G$ with $n \geq 3$ vertices, Corollary~\ref{3.5} yields the bounds. Theorems~\ref{3.7} and~\ref{3.9} precisely characterize graphs attaining these bounds, extending prior results for $k=2$ and $k=3$.

\section{Basic Results on Roman \(k\)-Domination}

We begin by bounds on the strong Roman $k$-domination number in terms of the domination number, leveraging the following key property.

\begin{lemma}\cite[Lemma 2.4]{Fahimeh}\label{2.4F}
Suppose that $f=(V_0, \dots , V_k)$ is a $\gamma_k^s(G)$-function. Then we may assume $|V_i|=0$ for each integer $i$ with $0<i<k/2$.
\end{lemma}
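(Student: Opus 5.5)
We prove Lemma~\ref{2.4F} by an exchange argument: starting from an arbitrary $\gamma_k^s(G)$-function, we repeatedly modify it, never increasing the weight and never destroying the $k$-SRDF property. Each modification strictly decreases the number of vertices carrying a weight $i$ with $0<i<k/2$, so after finitely many steps none remain. The resulting function is a $k$-SRDF of weight at most $\gamma_k^s(G)$, hence again a $\gamma_k^s(G)$-function.

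The key observation concerns weights in $(0,k/2)$. Such a weight, sitting at a vertex $u$, contributes to the strong condition of another vertex $w$ only if $f(u)>k/2$, which it is not. So it never helps any neighbour; it serves only its own vertex $u$. There it is needed because $f(u)<k/2$ forces
$$f(u)+\sum_{v\in N(u),\, f(v)>k/2} f(v)\ \ge\ k .$$
Let $s(u)=\sum_{v\in N(u),\, f(v)>k/2} f(v)$. We split into two cases.

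\emph{Case 1: $s(u)\ge k$.} Set $f(u)=0$. The weight drops and $u$ stays satisfied. No other vertex used $f(u)$, so they remain satisfied. This contradicts minimality, so in fact this case cannot occur for a minimum function; either way $V_i$ loses a vertex.

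\emph{Case 2: $s(u)<k$.} Then $s(u)$ is $0$ or a single value $f(v)$ with $k/2<f(v)<k$, since two such neighbours would give $s(u)>k$. Set $f(u)=0$ and instead raise the weight of a neighbour.
\begin{itemize}
\item If $u$ has a neighbour $v$ with $f(v)>k/2$, replace $f(v)$ by $f(v)+f(u)$. This is at most $k$, because $f(v)+f(u)\le$ the bound needed. More precisely, if the sum exceeds $k$, cap $f(v)$ at $k$; since $f(v)+f(u)\ge k$ is what was needed, setting $f(v)=k$ costs at most $f(u)$ extra. Then $u$ is dominated with $f(u)=0$, other vertices only gain, and the weight does not increase.
\item If $s(u)=0$, then $f(u)\ge k$, contradicting $f(u)<k/2$. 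So this case cannot happen.
\end{itemize}

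The only delicate point is the bookkeeping when capping at $k$, namely checking that $f(v)\mapsto\min\{k,f(v)+f(u)\}$ keeps $v$ above $k/2$ and does not increase the weight. This holds because $f(v)$ only increases, and the new value is at most $f(v)+f(u)$. Each step strictly reduces $\sum_{0<i<k/2}|V_i|$, so the process terminates with a $\gamma_k^s(G)$-function in which $|V_i|=0$ for all $0<i<k/2$.
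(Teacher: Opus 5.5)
Your exchange argument is correct. A weight in $(0,k/2)$ never contributes to any neighbour's condition, so either its heavy neighbours already supply at least $k$ and the weight can simply be dropped, or there is a unique heavy neighbour $v$ with $f(u)+f(v)\ge k$, and setting $f(u)=0$, $f(v)=k$ keeps the $k$-SRDF property without increasing the weight. The paper gives no proof of this lemma (it is quoted from the author's earlier work), but this is the standard argument, the $k$-analogue of the known fact that a minimum double Roman dominating function can avoid the value $1$.

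One slip to fix: you write that $f(v)+f(u)$ is at most $k$, but it is at least $k$. Hence $\min\{k,f(v)+f(u)\}$ is always exactly $k$, and this is what gives both $u$'s condition and the weight bound $k-f(v)\le f(u)$.
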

\begin{proposition}\label{Prop2.2}
For a graph $G$, if we set $A_k=\lfloor\frac{k+1}{2}\rfloor$, then
$$A_k\gamma(G)\leq \gamma_k^s(G)\leq k\gamma(G),$$
\end{proposition}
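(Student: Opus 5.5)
The two inequalities will be handled separately: the upper bound by an explicit construction from a minimum dominating set, and the lower bound by showing that the support of a suitably normalized $\gamma_k^s(G)$-function is itself a dominating set.

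For the upper bound, let $S$ be a minimum dominating set of $G$, so $|S|=\gamma(G)$. Define $f(v)=k$ for $v\in S$ and $f(v)=0$ otherwise. We check that $f$ is a $k$-SRDF.
\begin{itemize}
\item Every vertex $u$ with $f(u)<k/2$ has $f(u)=0$, so $u\notin S$.
\item Since $S$ is dominating, such a $u$ has a neighbour $v\in S$.
\item Then $f(v)=k>k/2$, so $f(u)+\sum_{v\in N_G(u),\,f(v)>k/2}f(v)\geq k$.
\end{itemize}
Hence $\gamma_k^s(G)\leq w(f)=k|S|=k\gamma(G)$.

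For the lower bound, take a $\gamma_k^s(G)$-function $f=(V_0,\dots,V_k)$. By Lemma~\ref{2.4F} we may assume $V_i=\emptyset$ for $0<i<k/2$. Then every vertex with positive weight satisfies $f(v)\geq k/2$, hence $f(v)\geq \lceil k/2\rceil$, and $\lceil k/2\rceil=\lfloor (k+1)/2\rfloor=A_k$.

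Set $D=\bigcup_{i\geq 1}V_i$, the support of $f$. We claim $D$ is a dominating set.
\begin{itemize}
\item Any $u\notin D$ has $f(u)=0<k/2$, so the strong condition forces $\sum_{v\in N_G(u),\,f(v)>k/2}f(v)\geq k>0$.
\item In particular $u$ has a neighbour $v$ with $f(v)>0$, i.e.\ a neighbour in $D$.
\end{itemize}
Therefore $\gamma(G)\leq |D|$, and
$$\gamma_k^s(G)=w(f)=\sum_{v\in D}f(v)\geq A_k|D|\geq A_k\gamma(G).$$

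The only delicate point is the normalization from Lemma~\ref{2.4F}. Without it, vertices with small positive weights $0<f(v)<k/2$ could lie in the support and weaken the per-vertex lower bound below $A_k$. That lemma is already available, so it suffices to note that $\lceil k/2\rceil=A_k$ holds for both parities of $k$.
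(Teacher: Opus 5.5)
Your proof is correct and follows the same route as the paper: the upper bound comes from assigning weight $k$ to a minimum dominating set, and the lower bound comes from normalizing a $\gamma_k^s(G)$-function via Lemma~\ref{2.4F}, so that its support is a dominating set on which every weight is at least $\lceil k/2\rceil=A_k$. You also explicitly verify that the support is dominating, which the paper only asserts.
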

\begin{proof}
If $S$ is a dominating set for $G$ with $|S|=\gamma(G)$, then assigning $k$ to each $x$ in $S$, and zero to other vertices yields a $k$-SRDF for $G$. This shows $\gamma_k^s(G)\leq k\gamma(G)$. Conversely, if $f=(V_0, \dots , V_k)$ is a $\gamma_k^s(G)$-function, then by Lemma \ref{2.4F}, we may assume that $|V_i|=0$ for $0 <i <k/2$. Furthermore $\bigcup_{A_k\leq i \leq k} V_i$  is a dominating set for $G$. Therefore
$$\gamma_k^s(G)=w(f)=\sum_{0\leq i \leq k} i|V_i|= \sum_{A_k\leq i \leq k} i|V_i| \geq \sum_{A_k\leq i \leq k} A_k|V_i|=A_k\sum_{A_k\leq i\leq k} |V_i| \geq A_k\gamma (G).$$
\end{proof}

Setting $k=2,3$ in Proposition \ref{Prop2.2} recovers \cite[Proposition 8]{double} and \cite[Proposition 1]{coc} concerning Roman and double Roman domination numbers. In the following result, which generalizes Theorem 2.5 and Proposition 2.6 of \cite{Fahimeh}, we obtain some upper bounds for $\gamma_k(G)$ and $\gamma_k^s(G)$ by exploiting known upper bounds for smaller values of $k$. In the subsequent section,  we present an alternative approach based on the explicit construction of suitable k-RDFs and k-SRDFs, which yields upper bounds that, in several cases, are strictly sharper than those established here in Theorem \ref{bound}(4).
\begin{theorem} \label{bound}
\begin{enumerate}
  \item For positive integers $c$ and $k$
$$\gamma_{ck}(G)\leq c\gamma_k(G), \qquad  \gamma_{ck}^s(G) \leq c\gamma_k^s(G).$$
  \item  For each $\gamma_k(G)$-function $f=(V_0,\dots , V_k)$ and  positive integers $c, k$ we have
$$\gamma_{ck+1}(G)\leq c\gamma_k(G)+\sum_{0\leq i \leq A_k}|V_i|\leq c\gamma_k(G)+|V|,$$
where $A_k=\lfloor k/2\rfloor$.
  \item For each $\gamma_k^s(G)$-function $f=(V_0,\dots , V_k)$ and  positive integers $c,k$ we have
$$\gamma_{ck+1}^s(G)\leq c\gamma_k^s(G)+|V|-|V_0|\leq (c+1)\gamma_k^s(G).$$
  \item For even integers $k$
$$\gamma_k(G)\leq 3kn/8, \ \ \gamma_k^s(G)\leq 2kn/5,$$
and for odd integers $k$
$$\gamma_k(G)\leq (3k+5)n/8, \ \ \gamma_k^s(G)\leq 2(k+1)n/5.$$
\end{enumerate}
\end{theorem}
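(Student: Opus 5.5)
For parts (1)–(3) I will take an optimal function for the smaller parameter, rescale it, and add a small correction. Part (4) then follows by specialising to $k=2$ and invoking the known bounds for Italian and Roman domination.

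\emph{Part (1).} Let $f$ be a $\gamma_k(G)$-function and put $g=cf$. If $g(u)<ck/2$, then $f(u)<k/2$, so $\sum_{v\in N_G[u]}f(v)\ge k$. Multiplying by $c$ gives $\sum_{v\in N_G[u]}g(v)\ge ck$. Hence $g$ is a $ck$-RDF of weight $c\gamma_k(G)$. The strong version is identical, because $g(v)>ck/2$ holds exactly when $f(v)>k/2$.

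\emph{Part (2).} Take a $\gamma_k(G)$-function $f$ with $A_k=\lfloor k/2\rfloor$. Define $g(v)=cf(v)+1$ if $f(v)\le A_k$ and $g(v)=cf(v)$ otherwise. Its weight is $c\gamma_k(G)+\sum_{0\le i\le A_k}|V_i|\le c\gamma_k(G)+|V|$. Only vertices $u$ with $g(u)<(ck+1)/2$ need checking.
\begin{itemize}
\item If $f(u)\ge A_k+1\ge (k+1)/2$, then $g(u)\ge c(k+1)/2\ge (ck+1)/2$, so $u$ imposes no condition.
\item If $f(u)=k/2$ with $k$ even, then $g(u)=ck/2+1>(ck+1)/2$, so again there is no condition.
\item If $f(u)<k/2$, then $\sum_{N_G[u]}f\ge k$. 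Since $u$ itself received the extra $+1$, we get $\sum_{N_G[u]}g\ge ck+1$.
\end{itemize}
So $g$ is a $(ck+1)$-RDF, which gives the bound.

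\emph{Part (3).} Take a $\gamma_k^s(G)$-function $f$ and set $g(v)=cf(v)+1$ if $f(v)>0$ and $g(v)=0$ if $f(v)=0$. Its weight is $c\gamma_k^s(G)+|V|-|V_0|$. Every vertex $v$ with $f(v)>k/2$ satisfies $g(v)=cf(v)+1>(ck+1)/2$, so the heavy neighbours of $u$ with respect to $f$ remain heavy for $g$.
\begin{itemize}
\item If $f(u)\ge k/2$ and $f(u)>0$, then $g(u)\ge ck/2+1>(ck+1)/2$, so $u$ imposes no condition.
\item If $f(u)=0$, the strong condition gives at least one heavy neighbour and a heavy-neighbour $f$-sum of at least $k$. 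Each heavy neighbour contributes an extra $+1$, so the $g$-sum is at least $ck+1$.
\item If $0<f(u)<k/2$, then $g(u)+\sum g\ge c\bigl(f(u)+\sum f\bigr)+1\ge ck+1$. Alternatively, Lemma~\ref{2.4F} lets us assume this case does not occur.
\end{itemize}
So $g$ is a $(ck+1)$-SRDF. The second inequality holds because every vertex outside $V_0$ has weight at least $1$, so $|V|-|V_0|\le w(f)=\gamma_k^s(G)$.

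\emph{Part (4).} The base cases are the known bounds for connected graphs with $n\ge 3$: $\gamma_2(G)\le 3n/4$ for Italian domination \cite{HHV,Klos} and $\gamma_2^s(G)\le 4n/5$ for Roman domination \cite{chambers}.
\begin{itemize}
\item For even $k=2c$, part (1) gives $\gamma_k(G)\le 3cn/4=3kn/8$ and $\gamma_k^s(G)\le 4cn/5=2kn/5$.
\item For odd $k=2c+1$, part (2) with base $2$ gives $\gamma_k(G)\le 3cn/4+n=(3(k-1)+8)n/8=(3k+5)n/8$.
\item For odd $k=2c+1$, part (3) gives $\gamma_k^s(G)\le (c+1)\cdot 4n/5=2(k+1)n/5$.
\end{itemize}

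The computations are routine. The main point needing care is the threshold bookkeeping in parts (2) and (3). One must check that every vertex whose new weight falls below $(ck+1)/2$ had old weight below $k/2$, or else had weight $0$ in the strong case. This is exactly why the $+1$ goes on the low-weight vertices in part (2) and on the positive-weight vertices in part (3). Part (4) also implicitly needs $G$ to be connected of order $n\ge 3$, since the cited base bounds require it.
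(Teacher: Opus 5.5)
Your proof is correct and follows the paper's argument. Both use the same three functions: $cf$ for part (1); $cf+1$ on $V_0\cup\dots\cup V_{A_k}$ and $cf$ elsewhere for part (2); $cf+1$ on $V\setminus V_0$ and $0$ on $V_0$ for part (3). Part (4) then comes from the $k=2$ bounds by exactly the arithmetic you give.

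Your function-based bookkeeping is slightly cleaner than the paper's partition notation. It avoids the paper's special merging of classes when $c=1$. It also handles the case $0<f(u)<k/2$ in part (3) directly, so the bound holds for every $\gamma_k^s(G)$-function without appealing to Lemma~\ref{2.4F}, which could alter $V_0$.
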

\begin{proof}
(1) Let $c,k\in \mathbb{N}$ and $f$ be a $\gamma_k(G)$-function (resp. a $\gamma_k^s(G)$-function).
Define a map $f'\colon V \to \{0,\dots ,ck\}$ by $f'(u)=cf(u)$ for each vertex $u$ of $G$.
It is straightforward to verify that $f'$ is a $(ck)$-RDF (resp. a $(ck)$-SRDF) of $G$.
Hence the result holds.
  
(2) Let $c,k\in \mathbb{N}$, $f=(V_0,\dots , V_k)$ be a $\gamma_k(G)$-function and $f'=(V'_0, \dots , V'_{ck+1})$, where $V'_{c\ell+1}=V_\ell$ for each $\ell$  with $0\leq \ell \leq A_k$, $V'_{c\ell}=V_\ell$ for each $\ell$ with $A_k+1\leq \ell \leq k$ and $V'_i=\emptyset$ for all other integers $i$. When $c=1$, $V'_{A_k+1}=V_{A_k}\cup V_{A_k+1}$  and define the remaining $V'_i$ as above. We first show that $f'$ is a $(ck+1)$-RDF. Let $u\in V(G)$ with $f'(u)<(ck+1)/2$.
Then $u\in V'_{c\ell+1}=V_\ell$ for some $\ell$ with $0\leq \ell <k/2$, since
$c\ell+1<(ck+1)/2$ implies $\ell<k/2-1/(2c)$.
Thus $f(u)<k/2$, and hence $\sum_{v\in N_G[u]}f(v)\geq k$.
Therefore,
\begin{align*}
  \sum_{v\in N_G[u]}f'(v) &= f'(u)+\sum_{v\in N_G(u), 0\leq f(v)\leq A_k}f'(v)+\sum_{v\in N_G(u),A_k+1\leq f(v) \leq k }f'(v) \\
   &= cf(u)+1+\sum_{v\in N_G(u), 0\leq f(v)\leq A_k}(cf(v)+1)+\sum_{v\in N_G(u),A_k+1\leq f(v) \leq k }(cf(v))  \\
  & \geq c\sum_{v\in N_G[u]}f(v)+1\\
  &\geq ck+1.
\end{align*}
Consequently,
\begin{align*}
\gamma_{ck+1}(G)&\leq w(f')\\
&=\sum_{0\leq i \leq ck+1}i|V'_i|\\
&=\sum_{0\leq \ell \leq A_k}((c\ell+1)|V_\ell |)+\sum_{A_k+1\leq \ell \leq k}(c\ell |V_\ell |)\\
&=c\sum_{0\leq \ell \leq k}(\ell |V_\ell |)+\sum_{0\leq \ell \leq A_k}|V_\ell |\\
&=c\gamma_k(G)+\sum_{0\leq \ell \leq A_k}|V_\ell |\\
&\leq c\gamma_k(G)+|V|.
\end{align*}

(3) Let $c,k\in \mathbb{N}$, $f=(V_0,\dots , V_k)$ be a $\gamma_k^s(G)$-function and $A_k=\lfloor (k+1)/2 \rfloor$. Since $f$ is a $\gamma_k^s(G)$-function by \cite[Lemma 2.4]{Fahimeh} we may assume $V_i=\emptyset$ for each $i$ with $0<  i< k/2$.  Hence
$$\sum_{A_k\leq \ell \leq k}(\ell |V_\ell |)=\gamma_k^s(G), \qquad \sum_{A_k\leq \ell \leq k}|V_\ell |=|V|-|V_0|.$$

 Let $f'=(V'_0, \dots , V'_{ck+1})$, where $V'_0=V_0$, $V'_{c\ell+1}=V_\ell$ for each $\ell$  with $A_k\leq \ell \leq k$  and $V'_i=\emptyset$ for other integers $i$. We show that $f'$ is a $(ck+1)$-SRDF. Let $u\in V(G)$ with $f'(u)<(ck+1)/2$. Then $u\in V_0$, since otherwise $u\in V'_{c\ell+1}$ for some $\ell$  with $A_k\leq \ell \leq k$, which would imply $c\ell+1<(ck+1)/2$, and hence $\ell<A_k$, a contradiction. Thus $\sum_{v\in N_G(u), f(v)>k/2} f(v) \geq k$. Therefore, by \cite[Remark 2.2(8)]{Fahimeh},
 \begin{align*}
 f'(u)+\sum_{v\in N_G(u), f'(v)>(ck+1)/2}f'(v)&=\sum_{v\in N_G(u), f'(v)>(ck+1)/2}f'(v)\\
 &=\sum_{v\in N_G(u), f(v)>k/2}(cf(v)+1)\\
 &\geq c\sum_{v\in N_G(u), f(v)>k/2}f(v)+1\\
 &\geq ck+1.
 \end{align*}
 Hence
\begin{align*}
\gamma_{ck+1}^s(G)&\leq w(f')\\
&=\sum_{1\leq i \leq ck+1}i|V'_i|\\
&=\sum_{A_k\leq \ell \leq k}((c\ell+1)|V_\ell |)\\
&=c\sum_{A_k\leq \ell \leq k}(\ell |V_\ell |)+\sum_{A_k\leq \ell \leq k}|V_\ell |\\
&=c\gamma_k^s(G)+|V|-|V_0|.
\end{align*}
Moreover,
$$|V|-|V_0|=\sum_{1\leq i \leq k} |V_i| \leq \sum_{1\leq i\leq k} i|V_i|=\gamma_k^s(G),$$
which yields the second inequality.

(4) The result follows directly from the known upper bounds for $\gamma_2(G)$ and $\gamma_2^s(G)$.
\end{proof}

The following two lemmas are pivotal for Section~3. The first demonstrates that attaching more than two pendant edges to a vertex does not affect $\gamma_k(G)$ and $\gamma_k^s(G)$.

\begin{lemma}\label{3.1}
 Assume $G$ is a graph, $v\in V(G)$, $\mathcal{L}_{v,G}$ is the set of all leaves in $G$ adjacent to $v$ and $r=|\mathcal{L}_{v,G}|$.
\begin{enumerate}
\item If $r\geq 2$, then there exists a $\gamma_k(G)$-function (resp. $\gamma_k^s(G)$-function) $f$ with $f(v)=k$ and $f(u)=0$ for each leaf adjacent to $v$.
\item Let $r\geq 2$ and $H$ be a graph obtained from $G$ by attaching some pendant edges to $v$. Then $\gamma_k(H)= \gamma_k(G)$ (resp. $\gamma_k^s(H)= \gamma_k^s(G)$).
\end{enumerate}
\end{lemma}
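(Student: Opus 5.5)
For part (1) I would begin with an arbitrary $\gamma_k(G)$-function (resp.\ $\gamma_k^s(G)$-function) $g$ and modify it locally around $v$ so that the weight does not go up. Write $\mathcal{L}_{v,G}=\{u_1,\dots,u_r\}$ with $r\ge 2$. Define $f$ by $f(v)=k$, $f(u_i)=0$ for every $i$, and $f=g$ at all other vertices.

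The weight comparison is the first step.
\begin{itemize}
\item In the Roman case, every leaf $u_i$ satisfies $\sum_{x\in N[u_i]}g(x)=g(u_i)+g(v)\ge k$ whenever $g(u_i)<k/2$. If $g(u_i)\ge k/2$, then $g(u_i)+g(v)\ge k/2$ anyway.
\item When $g(v)\ge k/2$, the gain is $k-g(v)\le k/2$. Meanwhile at least two leaves lose weight, and the loss is $\sum_i g(u_i)$.
\item If $g(u_i)<k/2$, then $g(u_i)\ge k-g(v)$. If $g(u_i)\ge k/2\ge k-g(v)$, the same bound holds. So each leaf carries weight at least $k-g(v)$, and hence $\sum_i g(u_i)\ge 2(k-g(v))\ge k-g(v)$.
\item When $g(v)<k/2$, each leaf has $g(u_i)\ge k-g(v)>k/2$, either from the leaf condition or directly. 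So $\sum_i g(u_i)\ge 2(k-g(v))\ge k-g(v)$ again.
\end{itemize}
In both cases $w(f)\le w(g)$.

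The strong case runs the same way, except that only neighbours with weight $>k/2$ count. The check I would do there is this: if $g(v)\le k/2$, then $v$ contributes nothing to a leaf, so each leaf needs $g(u_i)\ge k$ (when $g(u_i)<k/2$) or already has $g(u_i)\ge k/2$. The bound $\sum g(u_i)\ge k\ge k-g(v)$ still comes out, using $r\ge 2$ when $g(u_i)\ge k/2$.

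Next I would verify that $f$ is a $k$-RDF (resp.\ $k$-SRDF).
\begin{itemize}
\item The leaves are satisfied because $v$ has weight $k>k/2$.
\item $v$ itself needs no condition, since $f(v)=k\ge k/2$.
\item Any other vertex $x$ with $f(x)<k/2$ has $f(x)=g(x)$. Its neighbourhood is unchanged except possibly at $v$, where the value only increased from $g(v)$ to $k$. Leaves are not adjacent to $x$. So the closed-neighbourhood sum, and in the strong case the sum over heavy neighbours, does not decrease.
\end{itemize}
Minimality of $g$ then forces $w(f)=w(g)$, so $f$ is the required function.

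For part (2), let $H$ be obtained by adding leaves $\ell_1,\dots,\ell_s$ at $v$. I would prove the two inequalities separately.
\begin{itemize}
\item $\gamma_k(H)\le\gamma_k(G)$: take the function $f$ from (1) for $G$ and extend it by $0$ on the new leaves. Each new leaf sees $v$ with weight $k$, and nothing else changes, so the extension is valid with the same weight.
\item $\gamma_k(G)\le\gamma_k(H)$: apply (1) to $H$, which is allowed because $v$ has at least $r+s\ge 2$ leaves there. This gives a $\gamma_k(H)$-function with $f(v)=k$ and value $0$ on all leaves at $v$. Its restriction to $G$ is still valid, because the only vertices that lost neighbours are $v$, which has weight $k$ and so needs no condition, and the new leaves, which are gone. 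The restriction has the same weight.
\end{itemize}
The strong case is identical.

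The main obstacle is the weight accounting in part (1), especially in the strong version and for odd $k$, where the thresholds $<k/2$ and $>k/2$ differ. There I would handle $g(u_i)\ge k/2$ and $g(u_i)<k/2$ case by case, and rely on $r\ge2$ to absorb the gain $k-g(v)$ at $v$.
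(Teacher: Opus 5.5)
Your construction of $f$, the check that $f$ is a $k$-RDF (resp.\ $k$-SRDF), the strong-case weight count, and part (2) all match the paper. The step that fails is the Roman case with $g(v)<k/2$. There you claim that every leaf satisfies $g(u_i)\ge k-g(v)>k/2$, ``either from the leaf condition or directly''. In the ``direct'' case, a leaf with $g(u_i)\ge k/2$ need not reach $k-g(v)$.

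Take $P_3$ with $k$ even, centre $v$, and $g(v)=0$, $g(u_1)=g(u_2)=k/2$. This is a $\gamma_k(P_3)$-function. Yet $g(u_i)=k/2<k=k-g(v)$, and $\sum_i g(u_i)=k<2(k-g(v))=2k$. So both your per-leaf bound and the doubled bound are false.

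A warning sign is that, as written, your Roman count never uses $r\ge 2$. Without $r\ge2$ the replacement really can increase the weight. On $P_4$ with vertices $a,b,c,d$ in order, take $v=b$ and the $\gamma_k(P_4)$-function $g=(k/2,0,k,0)$ (weight $3k/2$ for even $k$). It becomes $f=(0,k,k,0)$, of weight $2k$.

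The repair is exactly where $r\ge 2$ enters. If $g(v)<k/2$, no leaf can have $g(u_i)<k/2$, since its condition would require $g(u_i)+g(v)\ge k$ with both terms below $k/2$. Hence all $r\ge2$ leaves carry at least $k/2$, and $\sum_i g(u_i)\ge k\ge k-g(v)$. This is the same observation you made correctly in the strong case.

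With this fix your proof coincides with the paper's. The paper only organizes the count differently, splitting on the leaves rather than on $g(v)$. If some leaf has weight below $k/2$, then $g(u)+g(v)\ge k$. Otherwise $r\ge 2$ gives $\sum_u g(u)\ge k$. Either way $g(v)+\sum_{u}g(u)\ge k$, which handles the Roman and strong versions at once.
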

\begin{proof}
(1) Suppose $g$ is a $\gamma_k(G)$-function (resp. $\gamma_k^s(G)$-function) of $G$. Either $g(u)<k/2$ for some $u\in \mathcal{L}_{v,G}$ or $g(u)\geq k/2$ for all $u\in \mathcal{L}_{v,G}$. Hence, either $g(u)+g(v)\geq k$ for some $u\in \mathcal{L}_{v,G}$, or $\sum_{u\in \mathcal{L}_{v,G}} g(u)\geq k$. Thus $g(v)+\sum_{u\in \mathcal{L}_{v,G}}g(u)\geq k$. Define $f(v)=k$, $f(u)=0$ for all $u\in \mathcal{L}_{v,G}$ and $f(x)=g(x)$ for other vertices $x$, then $f$ is a $k$-RDF (resp. $k$-SRDF) for $G$ with $w(f)\leq w(g)$. So $f$ is a $\gamma_k(G)$-function (resp. $\gamma_k^s(G)$-function) as required.

(2) Since $r\geq 2$, by Part 1, there exists a $\gamma_k(G)$-function (resp. $\gamma_k^s(G)$-function) $f$ with $f(v)=k$ and $f(u)=0$ for all $u\in \mathcal{L}_{v,G}$. Define $f'(w)=0$ for each new leaf $w$ adjacent to $v$ in $H$ and $f'(x)=f(x)$ for other vertices $x$. Then $f'$ is a $k$-RDF (resp. $k$-SRDF) for $H$. So $\gamma_k(H)\leq w(f')=w(f)=\gamma_k(G)$ (resp. $\gamma_k^s(H)\leq\gamma_k^s(G)$). Conversely, since $|\mathcal{L}_{v, H}| \geq |\mathcal{L}_{v, G}|\geq 2$, by Part 1, there exists a $\gamma_k(H)$-function (resp. $\gamma_k^s(H)$-function)  $f'$ with $f'(v)=k$ and $f'(u)=0$ for all $u\in \mathcal{L}_{v,H}$. If $f$ is  the restriction of $f'$ to $V(G)$, then $f$ is a $k$-RDF (resp. $k$-SRDF) for $G$. Thus
$\gamma_k(G)\leq w(f)=w(f')=\gamma_k(H)$ (resp. $\gamma_k^s(G)\leq\gamma_k^s(H)$).
\end{proof}

The next lemma explores subdivided pendant edges' impact on (strong) Roman $k$-domination under certain conditions.
\begin{lemma}\label{3.2}
Assume $H$ is a graph, $r$ is a non-negative integer and $u$ is a vertex in $H$ with $r$ pendant edges. Let $G$ be the graph obtained by subdividing every possible pendant edge $uw_i$ in $H$ by a vertex $v_i$. 
\begin{enumerate}
\item If $k$ is even and every $\gamma_k(G)$-function $f$ has $f(u)<k/2$, then $r=0$.
\item If $k$ is odd and every $\gamma_k^s(G)$-function $f$ has $f(u)<k/2$, then $r\leq 1$.
\end{enumerate}
\end{lemma}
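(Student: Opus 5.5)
For part (1) I will prove the contrapositive. Assume $k$ is even and $r\geq 1$, fix one subdivided pendant path $u v_1 w_1$ of $G$ (so $\deg_G v_1=2$ and $w_1$ is a leaf), and take any $\gamma_k(G)$-function $g$. I will build a $\gamma_k(G)$-function $f$ with $f(u)\geq k/2$, contradicting the hypothesis. Define
$$f(u)=\max\{g(u),k/2\},\qquad f(v_1)=0,\qquad f(w_1)=k/2,$$
and $f(x)=g(x)$ for all other $x$.

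Checking that $f$ is a $k$-RDF is local:
\begin{itemize}
\item $u$ and $w_1$ have weight at least $k/2$, so they impose no condition.
\item $v_1$ has closed-neighbourhood sum $f(u)+f(w_1)\geq k/2+k/2=k$.
\item Every other vertex has its closed-neighbourhood sum unchanged or increased, because only $u$ among its possible neighbours changed, and $f(u)\geq g(u)$.
\end{itemize}

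The remaining step is the weight comparison $f(u)+k/2\leq g(u)+g(v_1)+g(w_1)$. I split according to the condition at $w_1$.
\begin{itemize}
\item If $g(v_1)+g(w_1)\geq k$, then $\max\{g(u),k/2\}+k/2\leq g(u)+k\leq g(u)+g(v_1)+g(w_1)$.
\item Otherwise the condition at $w_1$ forces $g(w_1)\geq k/2$. Then $g(v_1)<k/2$, so the condition at $v_1$ gives $g(u)+g(v_1)+g(w_1)\geq k$.
  \begin{itemize}
  \item If $g(u)<k/2$, the new weight on $\{u,v_1,w_1\}$ is exactly $k$, which is at most the old weight.
  \item If $g(u)\geq k/2$, we already contradict the hypothesis.
  \end{itemize}
\end{itemize}
Hence $w(f)\leq w(g)$, so $f$ is a $\gamma_k(G)$-function with $f(u)\geq k/2$. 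Evenness of $k$ is used exactly in assigning the value $k/2$, which is an integer and is not $<k/2$.

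For part (2) I again argue by contradiction. Assume $k$ is odd and $r\geq 2$, and use the two paths $u v_1 w_1$ and $u v_2 w_2$. By Lemma~\ref{2.4F} I may take a $\gamma_k^s(G)$-function $g$ with all values in $\{0\}\cup\{(k+1)/2,\dots,k\}$; the hypothesis then forces $g(u)=0$. With $g(u)=0$, the strong conditions at $v_i$ and $w_i$ force $g(v_i)+g(w_i)\geq k$ for each $i$. The only ways to achieve equality are $(g(v_i),g(w_i))=(k,0)$ or $(0,k)$, and the latter can be swapped to the former without loss. So $g$ carries weight at least $2k$ on $\{u,v_1,w_1,v_2,w_2\}$. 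I then want a replacement with $f(u)>k/2$ and weight at most $2k$ on these five vertices. Any such replacement must keep $N_G(u)\setminus\{v_1,v_2\}$ at least as strongly dominated as before, which holds if $f(u)\geq g(u)=0$.

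I expect this weight comparison to be the main obstacle, and it may in fact fail. The natural candidates are
\begin{itemize}
\item $f(u)=k$, $f(v_i)=0$, $f(w_i)=(k+1)/2$, of weight $2k+1$;
\item $f(v_1)=k$, $f(u)=f(w_2)=(k+1)/2$, $f(w_1)=f(v_2)=0$, also of weight $2k+1$.
\end{itemize}
Both overshoot by one. So I would first test the smallest case $H=P_3$ with centre $u$, where $G=P_5$. Here $g=(0,k,0,k,0)$ has weight $2k$, and every $k$-SRDF with $f(u)\geq(k+1)/2$ appears to cost at least $2k+1$. If that check holds, the statement as written needs an extra hypothesis, for instance that $u$ has further neighbours, or a weight saving elsewhere that I would look for in the neighbours of $u$ outside the paths. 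Before attempting the exchange argument in general, I would settle this small case by hand.
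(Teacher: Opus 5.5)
Your part (1) is correct and is essentially the paper's argument. The paper replaces all $r$ paths at once by the values $k/2,0,k/2$ with $f(u)=k/2$, using $f(u)+\sum_i (f(v_i)+f(w_i))\ge (r+1)k/2$. You perform the same exchange on a single path via $\max\{g(u),k/2\}$, and that works equally well.

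Part (2) has a genuine gap, and your suspicion that the statement fails is mistaken. Your reduction is fine: by Lemma~\ref{2.4F} and the hypothesis, $g(u)=0$, and $g$ has weight at least $2k$ on $\{u,v_1,w_1,v_2,w_2\}$. The replacement you did not consider is
\[
f(u)=f(w_1)=f(w_2)=\tfrac{k+1}{2},\qquad f(v_1)=f(v_2)=0,
\]
with $f=g$ elsewhere. It is a $k$-SRDF:
\begin{itemize}
\item $u$, $w_1$ and $w_2$ have weight $>k/2$, so they impose no condition.
\item Each $v_i$ has the two neighbours $u$ and $w_i$, both of weight $(k+1)/2>k/2$, so its strong sum is $k+1\ge k$.
\item Every other neighbour of $u$ only gains, because $g(u)=0$ contributed nothing to its strong sum.
\end{itemize}
Its weight on the five vertices is $3(k+1)/2$, and $3(k+1)/2\le 2k$ is equivalent to $k\ge 3$, which holds because $k$ is odd and $k\ge 2$. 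So $w(f)\le w(g)$, and $f$ is a $\gamma_k^s(G)$-function with $f(u)>k/2$, the desired contradiction. The paper does the same on all $r$ paths, using $(r+1)(k+1)/2\le rk$, which is equivalent to $(r-1)(k-1)\ge 2$.

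The idea you missed is that, under the strong condition, a $0$-vertex is protected by two neighbours of weight $(k+1)/2$ each. Hence $u$ never needs weight $k$, and the saving comes from $u$ being shared by both paths; both of your candidates waste weight by spending a full $k$ on one vertex.

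Your proposed test case actually confirms the lemma. In $P_5=w_1v_1uv_2w_2$, the function $\bigl(\tfrac{k+1}{2},0,\tfrac{k+1}{2},0,\tfrac{k+1}{2}\bigr)$ is a $k$-SRDF of weight $3(k+1)/2$ with $u$ at $(k+1)/2$. For $k=3$ it ties with $(0,3,0,3,0)$. For $k\ge 5$ it is strictly lighter, so $(0,k,0,k,0)$ is not even a $\gamma_k^s(P_5)$-function. This agrees with $\gamma_k^s(P_5)=(3k+3)/2$ for odd $k$, stated in Case III of the proof of Theorem~\ref{3.3}.
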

\begin{proof}
(1) Suppose, to the contrary, that $r\geq 1$ and $f$ is a $\gamma_k(G)$-function. For each $1\leq i \leq r$ we have $f(v_i)+f(w_i)\geq k-f(u)$, since one of the following holds:
\begin{itemize}
\item $f(w_i)<k/2$. So $f(v_i)+f(w_i)\geq k\geq k-f(u)$.
\item $f(w_i)\geq k/2, f(v_i)\geq k/2$. So $f(v_i)+f(w_i)\geq k\geq k-f(u)$.
\item $f(w_i)\geq k/2, f(v_i)< k/2$. So $f(v_i)+f(w_i)\geq k-f(u)$.
\end{itemize}
Combining these with $f(u)<k/2$ and $r\geq 1$ implies
\begin{align*}
f(u)+\sum_{1\leq i\leq r}(f(v_i)+f(w_i))& \geq f(u)+r(k-f(u))\\
%&=rk-(r-1)f(u)\\
&\geq 
%rk-(r-1)k/2\\&=
(r+1)k/2.
\end{align*}
Define $f'(w_i)=f'(u)=k/2, f'(v_i)=0$ for each $i$ with $1\leq i \leq r$ and $f'(x)=f(x)$ for other vertices $x$. Then $f'$ is a $k$-RDF for $G$ with $w(f')\leq w(f)$. So $f'$ is a $\gamma_k(G)$-function with $f'(u)=k/2$, a contradiction.

(2) Suppose, to the contrary, that $r\geq 2$ and $f$ is a $\gamma_k^s(G)$-function. By Lemma \ref{2.4F}, we may assume  $f(u)=0$. Thus, for each $i$ with $1\leq i \leq r$, $f(v_i)+f(w_i)\geq k$. Since $r\geq 2$ and $k\geq 3$,
\begin{align*}
f(u)+\sum_{1\leq i\leq r}(f(v_i)+f(w_i))&\geq rk\\
&\geq (r+1)(k+1)/2.
\end{align*}
Define $f'(u)=f'(w_i)=(k+1)/2, f'(v_i)=0$ and $f'(x)=f(x)$ for other vertices $x$. Then $f'$ is a $k$-SRDF for $G$ with $w(f')\leq w(f)$. So $f'$ is a $\gamma_k^s(G)$-function with $f'(u)\geq k/2$, a contradiction.
\end{proof}

\section{Upper Bounds for (Strong) Roman $k$-Domination Number}

We commence by introducing special graph concepts central to the remainder of the paper. The \textbf{centers} (or Jordan centers) of a graph are vertices $u$ minimizing the greatest distance $d(u,v)$ to any vertex $v$. The \textbf{diameter}, $\diam G$, is the longest distance $d(u,v)$ for any vertices $u,v$ in $G$, and a \textbf{maximal path} is a path that cannot be extended by adding any more vertices or edges. Let $r$ and $s$ be positive integers. A tree with a single center adjacent to $r$ vertices is called a \textbf{star}, denoted $S_r$. A tree with two centers, one adjacent to $r$ leaves and the other to $s$ leaves, is a \textbf{double star}, denoted $S_{r,s}$. For a positive integer $t$ a \textbf{wounded spider} is a star graph $S_t$ with at least one and at most $t-1$ edges subdivided, while a \textbf{healthy spider} has all edges subdivided. In a wounded or healthy spider, the center of $S_t$  is the \textbf{head}, vertices at distance two from the head are \textbf{healthy feet} and vertices at distance one from the head are \textbf{wounded feet}. The path and cycle graphs with $r$ vertices are respectively denoted by $P_r$ and $C_r$.

We now generalize \cite[Theorem 13]{double}, \cite[Theorem 2.1]{chambers} and \cite[Theorem 12]{Klos} to arbitrary integer $k\geq 2$.

\begin{theorem}\label{3.3}
Let $k\geq 2, n\geq 3$ and $T$ be a tree with $n$ vertices. Then
$$\gamma_k(T)\leq \left\lbrace \begin{array}{ll}
\cfrac{3k+1}{8}n &  \text{ when } k \text{ is odd}, \\
\cfrac{3k}{8}n &  \text{ when } k \text{ is even};
\end{array}
\right. \text{and} \ \ \gamma_k^s(T)\leq \left\lbrace \begin{array}{ll}
\cfrac{3k+1}{8}n &  \text{ when } k \text{ is odd}, \\
\cfrac{2k}{5}n &  \text{ when } k\in \{2,4,6\},\\
\cfrac{3k}{8}n &  \text{ when } k\geq 8 \text{ and } k \text{ is even}.
\end{array}
\right.$$
\end{theorem}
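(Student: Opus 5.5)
Some of the cases need no new work. For $k$ even, Theorem~\ref{bound}(4) already gives $\gamma_k(T)\leq 3kn/8$ and $\gamma_k^s(T)\leq 2kn/5$. Hence the only new claims are:
\begin{enumerate}
\item[(a)] $\gamma_k(T),\gamma_k^s(T)\leq (3k+1)n/8$ for odd $k$;
\item[(b)] $\gamma_k^s(T)\leq 3kn/8$ for even $k\geq 8$.
\end{enumerate}
Since every $k$-SRDF is a $k$-RDF, $\gamma_k(T)\leq\gamma_k^s(T)$, so it suffices to bound $\gamma_k^s$ in (a). For $k=3$ the bound $(3k+1)n/8=5n/4$ is the known double Roman bound \cite{double}, so we may take $k\geq 5$ odd.

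I would argue by strong induction on $n$, in the spirit of \cite[Theorem 13]{double} and \cite[Theorem 2.1]{chambers}.

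\textbf{Small diameter.} If $\diam T\leq 3$, then $T$ is a star or a double star. Put $k$ on each center and $0$ elsewhere; this uses $k$ or $2k$. One checks that this is at most $3kn/8$ (even $k\geq 8$) or $(3k+1)n/8$ (odd $k$), since $n\geq 3$, and $n\geq 4$ for a double star. For a double star with $n=4$, i.e.\ $P_4$, the value $2k$ exceeds $3k\cdot 4/8=3k/2$. So I would instead use a cheaper SRDF of $P_4$, namely weights $(\lceil k/2\rceil, 0, k, 0)$ or a similar pattern, and verify that it satisfies the bound. These small base cases need explicit checking.

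\textbf{Large diameter.} Assume $\diam T\geq 4$ and take a maximal (diametral) path $v_0v_1\cdots v_d$. Then every child of $v_1$ is a leaf, and the subtree rooted at $v_2$ has depth at most $2$, so $T_{v_2}$ is a star, a wounded spider or a healthy spider with head $v_2$. By Lemma~\ref{3.1}(2), we may first reduce to the case where each vertex has at most two leaves, since extra pendant leaves change neither parameter and only increase $n$.

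The main case split is on the structure at $v_2$ and $v_3$:
\begin{enumerate}
\item[(i)] If $v_1$ has at least two leaf children, delete $v_1$ and its leaves. Extend an optimal function of the remaining tree by $f(v_1)=k$. This adds $k$ to the weight while removing at least $3$ vertices, and $k\leq 3\cdot 3k/8$.
\item[(ii)] If $v_2$ has several children that are support vertices of degree $2$ (a spider structure), delete the whole subtree $T_{v_2}$, which has $m$ vertices. If $T_{v_2}$ is a healthy spider with $t$ legs, then $m=2t+1$. Assign $k$ to $v_2$ and an appropriate large value to each healthy foot or its support vertex. The point is to balance the cost $k+tk'$ against $\frac{3k}{8}(2t+1)$, where $k'$ is the weight needed per leg.
\item[(iii)] Otherwise, delete $v_2$ together with its subtree, possibly also $v_3$, and compare costs in the same way.
\end{enumerate}
In each case, the pieces are glued to an optimal function on $T'$ of order $n'\geq 3$ obtained by induction. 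If $n'<3$, the tree is small and is handled directly.

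\textbf{Main obstacle.} I expect the hardest step to be the healthy spider case, with legs $v_2\,x_i\,y_i$. For $k$ even, the natural assignment is $y_i=x_i=k/2$ per leg, plus $v_2$ handled by its neighbour, which gives cost $k$ per $2$ vertices. That ratio $k/2$ is too large. The fix is to put $k$ on $v_2$ and use leg patterns sharing weight with $v_2$. In the SRDF setting, a leaf $y_i$ with value $0$ needs a neighbour of value at least $k$; alternatively one can give $y_i=\lceil k/2\rceil$ and $x_i=0$, with $x_i$ covered by $v_2$.

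The resulting per-leg cost $\lceil k/2\rceil$ over $2$ vertices gives the ratio $\lceil k/2\rceil/2$. This equals $k/4$ for even $k$ and $(k+1)/4$ for odd $k$. Both are below $3k/8$ and $(3k+1)/8$, except that the cost $k$ of the head and the extra vertices must be amortized. The condition $k\geq 8$ is exactly what makes $k+t\cdot k/2\leq \frac{3k}{8}(2t+1)$ hold in the worst configurations, which involve the neighbour $v_3$ and short spiders; this explains why $k=4,6$ only reach $2k/5$.

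So I would carefully enumerate the spider types with $t=1,2$ legs, including $v_3$ in the deleted part when necessary. Cases where $v_3$ ends up undominated in $T'$ are handled by raising $f(v_3)$ or by invoking Lemma~\ref{3.2} to force an optimal function of $T'$ with large value at the attaching vertex.
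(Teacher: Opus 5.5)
Your skeleton matches the paper's, and offloading even $k\le 6$ to Theorem~\ref{bound}(4) and $k=3$ to \cite{double} is legitimate. The skeleton consists of induction on $n$, stars and double stars as base cases, and Lemma~\ref{3.1}(2) to strip surplus leaves. It then deletes a support vertex with two leaf children at cost $k$, splits off the depth-two spider at $v_2$, and glues it to an inductive function on the rest.

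The gap is exactly in the case you flag as hardest. Take $T_{v_2}$ to be a healthy spider with two legs and no wounded feet, i.e.\ a $P_5$ centred at $v_2$; this includes $T=P_5$ itself. Your assignment ``$k$ on the head, $\lceil k/2\rceil$ on each foot'' costs $2k$ for even $k$ and $2k+1$ for odd $k$. This exceeds $5\cdot\frac{3k}{8}$ (resp.\ $5\cdot\frac{3k+1}{8}$) for \emph{every} $k$. Your explanation of the threshold is therefore wrong: $k+t\cdot\frac{k}{2}\le\frac{3k}{8}(2t+1)$ is homogeneous in $k$ and equivalent to $2t\ge 5$, so no value of $k$ rescues $t=2$. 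Enlarging the deleted part cannot help when $T=P_5$ itself. Since $P_5$ is an instance of the theorem, with equality for $k=8$, this case cannot be argued away.

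The missing idea is to let each middle vertex of a leg be strongly dominated jointly by two neighbours of weight just above $k/2$. On $w_1v_1uv_0w_0$, put $\lfloor k/2\rfloor+1$ on $w_1,u,w_0$ and $0$ on $v_1,v_0$.
\begin{itemize}
\item For even $k$ this costs $3k/2+3$, which is at most $15k/8$ exactly when $k\ge 8$. This, not the head-plus-legs count, is the source of the threshold. For $k\in\{2,4,6\}$ one has $\gamma_k^s(P_5)=2k>15k/8$, which is why those cases only reach $2k/5$.
\item For odd $k$ it costs $3(k+1)/2<5(3k+1)/8$. In fact, with no wounded feet you can put $(k+1)/2$ on the head and on all feet for any number of legs, as the paper does in its spider case.
\end{itemize}
With this pattern added, your assignment does handle the remaining spider types: $t\ge 3$; $t\in\{1,2\}$ with at least one wounded foot; and $t=1$ without wounded feet, using $k$ on $v_1$.

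There are two smaller slips.
\begin{itemize}
\item The double star $S_{1,2}$ ($n=5$) also breaks your base-case claim, since $2k$ exceeds $5\cdot\frac{3k}{8}$, and $5\cdot\frac{3k+1}{8}$ once $k\ge 7$. It needs the same fix you use for $P_4$: $\lceil k/2\rceil$ on the lone leaf, $0$ on its centre, $k$ on the other centre.
\item When $n'<3$ the tree is not ``small''. It is an arbitrary spider with head $v_2$, so it must go through the same spider computation.
\end{itemize}
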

\begin{proof}
First we deal with $\gamma_k^s(T)$. For $k\in \{2,4,6\}$, set $A_k=2k/5$; for other even $k$, set $A_k=3k/8$; and for odd $k$, set $A_k=(3k+1)/8$. We proceed by induction on $n$.

If $\diam T=2$, then $T=S_r$ for some $r$ with $r\geq 2$. Hence $\gamma_k^s(T)=k< A_kn$ since $n\geq 3$.

If $\diam T=3$, then $T$ is the double star $S_{r,s}$ for some positive integers $r,s$. If $r=1$ or $s=1$, then assign zero to one center, $k/2$ (resp. $(k+1)/2$) to its sole neighbour, $k$ to the other center, and zero to all its  neighbours to obtain a $k$-SRDF for $T$. Thus  $\gamma_k^s(T)=3k/2$ (resp. $(3k+1)/2$)$\leq A_kn$ since $n\geq 4$. If $r,s>1$, assign $k$ to both centers and zero elsewhere, yielding a $k$-SRDF with $\gamma_k^s(T)=2k < A_kn$ since $n\geq 6$.

Therefore we may assume $\diam T\geq 4$ and so $n\geq 5$. Let $P$ be a maximal path in $T$ with end vertices $w_0$ and $r$. Root $T$ at $r$, let $v_0$ be the vertex in $P$ adjacent to $w_0$ and $u$ its parent. Fix $P, r, u, v_0$ and $w_0$ in such a way that $\deg v_0$ is maximum. The following cases arise:

\textbf{Case I.}  $\deg v_0>3$.
By the choice of $v_0$, Lemma \ref{3.1}(2) and the inductive hypothesis,
$$\gamma_k^s(T)=\gamma_k^s(T\setminus w_0)\leq A_k(n-1)<A_kn.$$

\textbf{Case II.} $\deg v_0=3$. Suppose $w_0$ and $w_0'$ are the only children of $v_0$.  Since \linebreak $T'=T\setminus \{v_0, w_0, w'_0\}$ is a tree with at least three vertices, the inductive hypothesis gives $\gamma_k^s(T')\leq A_k(n-3)$. If $f'$ is a $\gamma_k^s(T')$-function, define $f(v_0)=k, f(w_0)=f(w'_0)=0$ and $f(x)=f'(x)$ for other vertices $x$. Then $f$ is a $k$-SRDF for $T$ and
$$\gamma_k^s(T)\leq w(f)=w(f')+k=\gamma_k^s(T')+k\leq A_k(n-3)+k<A_kn.$$

\textbf{Case III.} $\deg v_0=2$ and $T$ is a spider with head $u$. Since $\diam T\geq 4$, $T$ is a spider as in Figure \ref{fig2} with $a+2$ healthy feet and $b$ wounded feet for some non-negative integers $a$ and $b$. When $a=b=0$, we have $T=P_5$ and direct computation shows that $\gamma_k^s(T)=2k=A_kn$ for $k\in\{2,4,6\}$ while $\gamma_k^s(T)=3k/2+3\leq A_kn$ for other even $k$ (resp. $\gamma_k^s(T)=(3k+3)/2<A_kn$).  When $b=0$ and $k$ is odd, assign $(k+1)/2$ to the head and healthy feet and zero elsewhere; in other cases, assign $k$ to the head, $k/2$ for even $k$ (resp. $(k+1)/2$) to the healthy feet and zero elsewhere, yielding a $k$-SRDF with weight less than $A_kn$.

\textbf{Case IV.} $\deg v_0=2$ and $T$ is not a spider. Suppose $T'$ is the induced subgraph of $T$ on $u$ and its descendants and  $T''=T\setminus V(T')$. Then $T'$ is a spider with head $u$ and $T''$ is a tree with at least three vertices. So by Cases III and $\diam T\leq 3$ and inductive hypothesis, there are $k$-SRDFs $f'$ and $f''$  respectively for $T'$ and $T''$ such that $w(f')\leq A_k|V(T')|$ and $w(f'')\leq A_k|V(T'')|$. By defining $f(x)=f'(x)$ for $x\in V(T')$ and $f(x)=f''(x)$ for $x\in V(T'')$, one may find a $k$-SRDF $f$ on $T$. So
 $$\gamma_k^s(T)\leq w(f)=w(f')+w(f'')\leq A_k|V(T')|+A_k|V(T'')|=A_kn.$$
 
 To prove the bounds for $\gamma_k(T)$, notice $\gamma_k(T)\leq \gamma_k^s(T)$. So the bounds for $\gamma_k^s(T)$ also apply to $\gamma_k(T)$ and it remains to prove $\gamma_k(T)\leq 3kn/8$ for $k\in \{2,4,6\}$. The proof is similar to above except when $T=P_5$. Direct computation shows when $T=P_5$, $\gamma_k(T)=3k/2 < A_kn$. 
\end{proof}

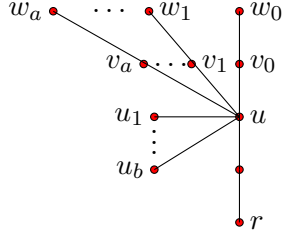
\begin{figure}[htbp]
\centering
\begin{tikzpicture}
[scale=0.7]
\filldraw[fill=red] (0,0) circle (2pt) node[right]{$r$};
\filldraw[fill=red] (0,1) circle (2pt);
\filldraw[fill=red] (0,2) circle (2pt) node[right]{$u$};
\filldraw[fill=red] (-1.6,2) circle (2pt) node[left]{$u_1$};
\filldraw[fill=red] (-1.6,1) circle (2pt) node[left]{$u_b$};
\filldraw[fill=red] (-1.6,1.1) node[above]{$\vdots$};

\filldraw[fill=red] (0,3) circle (2pt) node[right]{$v_0$};
\filldraw[fill=red] (-0.9,3) circle (2pt) node[right]{$v_1$};
\filldraw[fill=red] (-1.8,3) node[right]{$\dots$};
\filldraw[fill=red] (-1.8,3) circle (2pt) node[left]{$v_a$};
\filldraw[fill=red] (0,4) circle (2pt) node[right]{$w_0$};
\filldraw[fill=red] (-1.7,4) circle (2pt) node[right]{$w_1$};
\filldraw[fill=red] (-2.4,3.8)  node[above]{$\dots$};
\filldraw[fill=red] (-3.5,4) circle (2pt) node[left]{$w_a$};

\draw (0,2) -- (-1.7,4);
\draw (0,2) -- (-3.5, 4);
\draw (0,2) -- (-1.6, 2);
\draw (0,2) -- (-1.6,1);

\draw (0,1) -- (0,2);
\draw (0,2) -- (0,3);
\draw (0,3) -- (0,4);
\draw (0,0) -- (0,1);
\end{tikzpicture}

 \caption{A spider with head $u$  in the proof of Theorem \ref{3.3}}
\label{fig2}
\end{figure}

%Since $\gamma_k(T)\leq \gamma_k^s(T)$, the bounds for $\gamma_k^s(T)$ in Theorem \ref{3.3} also apply to $\gamma_k(T)$. However, the next theorem provides a sharper bound for $\gamma_k(T)$ when $k\in\{2,4,6\}$.

%\begin{theorem}\label{3.4}
%Let $n\geq 3$, $k$ be even and $T$ be a tree with $n$ vertices. Then $\gamma_k(T)\leq 3kn/8$.
%\end{theorem}
%\begin{proof}
%The proof is similar to that of Theorem \ref{3.3} except when $T=P_5$. Direct computation shows when $T=P_5$, $\gamma_k(T)=3k/2 < (3k/8)\cdot 5$. 
%\end{proof}

Since each connected graph has a spanning tree and adding edges can't increase the value of (strong) Roman
$k$-domination number, Theorem \ref{3.3} yields:
\begin{corollary}\label{3.5}
If $n\geq 3$ and $G$ is a connected graph with $n$ vertices, then

$$\gamma_k(G)\leq \left\lbrace \begin{array}{ll}
\cfrac{3k+1}{8}n &  \text{ when } k \text{ is odd}, \\
\cfrac{3k}{8}n &  \text{ when } k \text{ is even};
\end{array}
\right. \text{and} \ \
\gamma_k^s(G)\leq \left\lbrace \begin{array}{ll}
\cfrac{3k+1}{8}n &  \text{ when } k \text{ is odd}, \\
\cfrac{2k}{5}n &  \text{ when } k\in\{2,4,6\},\\
\cfrac{3k}{8}n &  \text{ when } k\geq 8 \text{ and } k \text{ is even}.
\end{array}
\right.
$$
\end{corollary}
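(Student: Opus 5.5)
The plan is to reduce the corollary to Theorem~\ref{3.3} through two observations. First, $G$ has a spanning tree $T$ on the same $n\geq 3$ vertices. Second, both $\gamma_k$ and $\gamma_k^s$ are monotone non-increasing under adding edges on a fixed vertex set. Combining these gives $\gamma_k(G)\leq\gamma_k(T)$ and $\gamma_k^s(G)\leq\gamma_k^s(T)$. Each case of the displayed bounds then follows verbatim from the corresponding case of Theorem~\ref{3.3} applied to $T$, since $T$ and $G$ have the same order $n$.

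\textbf{Spanning tree.} Since $G$ is connected, it has a spanning tree $T$. This is standard: repeatedly delete an edge lying on a cycle until the graph is acyclic. Connectivity is preserved throughout, so the result is a tree with vertex set $V(G)$ and exactly $n$ vertices.

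\textbf{Monotonicity.} Let $f$ be a $\gamma_k(T)$-function (resp.\ a $\gamma_k^s(T)$-function). We check that the same $f$ is a $k$-RDF (resp.\ $k$-SRDF) of $G$.
\begin{itemize}
\item Roman case. For every $u$ with $f(u)<k/2$ we have $N_T[u]\subseteq N_G[u]$, and all weights are nonnegative. Hence $\sum_{v\in N_G[u]}f(v)\geq \sum_{v\in N_T[u]}f(v)\geq k$.
\item Strong case. For every $u$ with $f(u)<k/2$, the set of neighbours $v$ with $f(v)>k/2$ can only grow when passing from $T$ to $G$. Hence $f(u)+\sum_{v\in N_G(u),\,f(v)>k/2}f(v)\geq f(u)+\sum_{v\in N_T(u),\,f(v)>k/2}f(v)\geq k$.
\end{itemize}
The weight of $f$ does not depend on the edge set, so $\gamma_k(G)\leq w(f)=\gamma_k(T)$, and likewise $\gamma_k^s(G)\leq\gamma_k^s(T)$.

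\textbf{Conclusion and main point.} Substituting the bounds of Theorem~\ref{3.3} for $T$ gives each line of the statement. I expect no real obstacle here. The only thing to confirm is that both domination conditions are monotone under enlarging neighbourhoods, and this holds because all weights are nonnegative and the thresholds $k/2$ refer only to vertex weights, not to the graph. All the substantive work is already contained in Theorem~\ref{3.3}.
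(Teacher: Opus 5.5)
Your proposal is correct and matches the paper's argument exactly: take a spanning tree, note that adding edges cannot increase $\gamma_k$ or $\gamma_k^s$, and apply Theorem~\ref{3.3} to the tree. Your explicit check that both the Roman and strong Roman conditions survive enlarging neighbourhoods supplies the one step the paper asserts without proof.
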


Now, we consider the cases where equalities in Corollary \ref{3.5} hold. To this end, recall that the rooted product $H(\mathcal{K})$ of a graph $H$ by a family of rooted graphs $\mathcal{K}=\{K_x \ | \ x\in V(H)\}$ is defined in 1978, \cite{G+M}, as the union of $H$ and $K_x$s such that for each vertex $x$ of $H$ one should identify $x$ with the root of $K_x$. Now suppose $\mathcal{B}$ is a set of rooted graphs whose root is one of their centers. A graph $G$ is a $\mathcal{B}$-branch graph if $G=H(\mathcal{K})$ for some connected graph $H$ such that $K_x$s belong to $\mathcal{B}$. In this case any $K_x$ is called a branch of $G$. If $G$ is also a tree, it is called a $\mathcal{B}$-branch tree. A $\{P_4, P_5\}$-branch graph is shown in Figure \ref{fig4}.

\vspace{1mm}
\begin{figure}[htbp]
\centering
\begin{tikzpicture}
[scale=0.6]
\filldraw[fill=red] (-2.5,0) circle (2pt);
\filldraw[fill=red] (-2.5,-1) circle (2pt);
\filldraw[fill=red] (-2.5,-2) circle (2pt);
\filldraw[fill=red] (-3,-1) circle (2pt);

\draw (-3,-1) -- (-2.5,0);
\draw (-2.5,0) -- (-2.5,-1);
\draw (-2.5,-1) -- (-2.5,-2);

\filldraw[fill=red] (-1,0) circle (2pt);
\filldraw[fill=red] (-1,-1) circle (2pt);
\filldraw[fill=red] (-1,-2) circle (2pt);
\filldraw[fill=red] (-1.5,-1) circle (2pt);
\filldraw[fill=red] (-2,-2) circle (2pt);

\draw (-1,0) -- (-1,-1);
\draw (-1,0) -- (-1.5,-1);
\draw (-1,-1) -- (-1,-2);
\draw (-1.5,-1) -- (-2,-2);

\draw (-0.2,-0.8) node[below]{$\dots$};

\filldraw[fill=red] (1,0) circle (2pt);
\filldraw[fill=red] (1,-1) circle (2pt);
\filldraw[fill=red] (1,-2) circle (2pt);
\filldraw[fill=red] (0.5,-1) circle (2pt);

\draw (0.5,-1) -- (1,0);
\draw (1,0) -- (1,-1);
\draw (1,-1) -- (1,-2);

\filldraw[fill=red] (2.5,0) circle (2pt);
\filldraw[fill=red] (2.5,-1) circle (2pt);
\filldraw[fill=red] (2.5,-2) circle (2pt);
\filldraw[fill=red] (2,-1) circle (2pt);
\filldraw[fill=red] (1.5,-2) circle (2pt);

\draw (1.5,-2) -- (2,-1) -- (2.5,0) -- (2.5,-1) -- (2.5,-2);

\draw (0,-0.4) node[above]{$H$};
\draw[blue] (0, 0) ellipse (4cm and 0.5cm);

\end{tikzpicture}
\caption{A $\{P_4, P_5\}$-branch graph when $H$ is a connected graph  }
\label{fig4}
\end{figure}
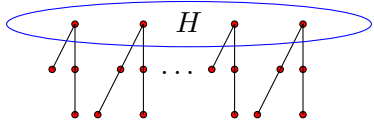

\begin{lemma}\label{3.6}
Let $n\geq 3$ and $G$ be a graph with $n$ vertices.
\begin{enumerate}
\item If $G=C_5$, then $\gamma_8^s(G)=15$ and $\gamma_k^s(G)=2k$ when $k\in \{2,4,6\}$.
\item If $G$ is a $\{P_5\}$-branch graph, then $\gamma_8^s(G)=3n$ and $\gamma_k^s(G)=2kn/5$ when $k\in \{2,4,6\}$.
\item If $G$ is a $\{P_4\}$-branch graph, then $\gamma_k^s(G)=3kn/8$ for all even integers $k$ with $k\geq 8$.
\item If $G$ is a $\{P_4\}$-branch graph, then $\gamma_k^s(G)=(3k+1)n/8$ for all odd integers $k$.
\item If $G$ is a $\{P_4,P_5\}$-branch graph, then $\gamma_8^s(G)=3n$.
\end{enumerate}
\end{lemma}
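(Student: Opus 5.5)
The plan is to prove each equality as a matching upper and lower bound. The upper bounds come from explicit $k$-SRDFs built branch by branch. The lower bounds come from a local weight argument: every $k$-SRDF puts at least a certain amount of weight on each branch, and this does not depend on how the roots are joined in $H$. Throughout I would use Lemma \ref{2.4F}: it lets me assume that a minimum $k$-SRDF takes values only in $\{0\}\cup\{\lceil k/2\rceil,\dots,k\}$. A vertex of weight $0$ must then have neighbours of weight $>k/2$ whose weights sum to at least $k$.

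\textbf{Upper bounds.} Write a $P_5$ branch rooted at its middle vertex $c$ as $a'-a-c-b-b'$.
\begin{itemize}
\item For $k\in\{2,4,6\}$ put $f(a)=f(b)=k$ and $0$ on $c,a',b'$. This branch has weight $2k=2k\cdot 5/5$.
\item For $k=8$ put $f(c)=f(a')=f(b')=5$ and $0$ on $a,b$. Each of $a,b$ then has weight $10\geq 8$ from neighbours of weight $>4$, so this branch has weight $15=3\cdot 5$.
\end{itemize}
Write a $P_4$ branch rooted at an inner vertex $c$ as $c'-c-a-a'$. Put $f(c)=k$, $f(a')=\lceil k/2\rceil$ and $0$ on $c',a$. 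This gives weight $3k/2$ for even $k$, which is $3k\cdot 4/8$, and $(3k+1)/2$ for odd $k$, which is $(3k+1)\cdot 4/8$. In every case the root receives weight $0$ or weight above $k/2$ and is dominated inside its own branch. So the union over all branches is a $k$-SRDF of $G$, whatever the edges of $H$ are. For $C_5=v_1\cdots v_5$:
\begin{itemize}
\item for $k\le 6$, the weights $k$ on $v_1,v_3$ give $2k$;
\item for $k=8$, the weights $5$ on $v_1,v_3,v_4$ give $15$.
\end{itemize}

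\textbf{Lower bounds.} Let $f$ be any $k$-SRDF normalized as above. For a pendant pair (a leaf $x'$ with support $x$), either $f(x')\geq\lceil k/2\rceil$, or $f(x')=0$ and then $f(x)=k$. Hence each arm $\{a,a'\}$ carries weight at least $\lceil k/2\rceil$. I would then do a short case analysis on $f(c)$ and on whether each arm is ``cheap''. An arm is cheap when $f(a')=\lceil k/2\rceil$ and $f(a)=0$. If an arm is cheap, then $a$ must be dominated by neighbours of weight $>k/2$, but $a'$ alone contributes at most $k/2$ in the even case. This forces $f(c)>k/2$, and in fact $f(c)=k$ unless $a'$ itself exceeds $k/2$.

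Running through these possibilities for a $P_5$ branch gives a weight of at least $\min\{2k,\,3(k/2+1)\}$. This equals $2k$ for $k\le 6$ and $15$ for $k=8$. For a $P_4$ branch the pendant $c'$ forces either $f(c)=k$ or $f(c')\geq\lceil k/2\rceil$. Combining this with the arm $\{a,a'\}$ gives at least $3k/2$ for even $k\geq 8$ and $(3k+1)/2$ for odd $k$.

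Summing over the branches, which partition $V(G)$, gives items (2)–(5). Item (5) uses the fact that for $k=8$ both branch types have minimum weight exactly $3$ times their order. For $C_5$, I would show that a total of $2k-1$ (or $14$ when $k=8$) cannot dominate all five vertices. The available values are at most $k$, and each vertex of weight $0$ needs weight $k$ from its two neighbours, which leaves only a few patterns to rule out.

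\textbf{Main obstacle.} I expect the main obstacle to be making the per-branch lower bound airtight. A root $c$ may be dominated by another root in $H$, so $f(c)$ could be $0$ or small with help from outside, and I must check that no such configuration lowers the branch weight. The argument should therefore rely only on the constraints at the non-root vertices, namely the leaves and the arm vertices, whose neighbourhoods lie entirely inside the branch. I would verify the parity cases $k$ odd versus even carefully there, since the threshold ``$>k/2$'' behaves differently in the two cases.
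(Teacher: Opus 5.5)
Your proposal is correct, and the lower-bound half is the paper's own argument. The paper likewise normalizes with Lemma~\ref{2.4F}, splits on whether $f(c)\le k/2$, and bounds each branch using only the constraints at the leaves and arm vertices. It arrives at the same numbers: $\min\{2k,3k/2+3\}$ for $P_5$, and $3k/2$ or $(3k+1)/2$ for $P_4$.

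\textbf{Upper bounds.} The paper gets every upper bound by quoting Corollary~\ref{3.5}, so the lemma inherits its dependence on the tree induction of Theorem~\ref{3.3}. Your explicit branch-wise functions dominate every branch internally. This makes the lemma self-contained and shows directly why the value does not depend on the connecting graph $H$.

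\textbf{The cycle $C_5$.} The paper only exhibits the functions and asserts they are minimum. You actually prove a lower bound, and it works as you expect:
\begin{itemize}
\item Each nonzero value is at least $\lceil k/2\rceil$, so at most three vertices are nonzero.
\item With three nonzero vertices, none can equal $k$. Hence the two zeros must be non-adjacent, and all three nonzero values must exceed $k/2$. That totals at least $3k/2+3$, which exceeds $2k-1$ for $k\le 6$ and exceeds $14$ for $k=8$.
\item With two nonzero vertices, at most one equals $k$, and some zero vertex is left undominated.
\end{itemize}

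\textbf{A point to make explicit.} Suppose $f(c)\le k/2$. Then $c$ contributes nothing to the sum at $a$, so $f(a)=0$ forces $f(a')=k$, and each arm carries at least $k$. Your cheap/non-cheap split alone only guarantees $\lceil k/2\rceil+1$ for a non-cheap arm. That is too weak in this case for both the $P_5$ total $2k$ and the $P_4$ total $k+\lceil k/2\rceil$. The general principle you state (a zero vertex needs neighbours of weight $>k/2$ summing to at least $k$) gives the stronger bound immediately, so you only need to invoke it there.
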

\begin{proof}
Consider $C_5$, $P_4$ and $P_5$ as:
$$V(C_5)=\{u_1, \dots , u_5\}, E(C_5)=\{u_1u_2, u_2u_3, u_3u_4, u_4u_5, u_5u_1\},$$
$$V(P_4)=\{\ell_1, c, r_1, r_2\}, E(P_4)=\{\ell_1c, cr_1, r_1r_2\},$$
$$V(P_5)=\{\ell_2,\ell_1, c, r_1, r_2\}, E(P_5)=\{\ell_2\ell_1, \ell_1c, cr_1, r_1r_2\}.$$

(1) For $k\in \{2,4,6\}$ assigning $f(u_1)=k, f(u_2)=f(u_5)=0, f(u_3)=f(u_4)=k/2$ and for $k=8$ assigning $f(u_1)=f(u_3)=f(u_4)=k/2+1, f(u_2)=f(u_5)=0$  define desired $k$-SRDF with minimum weight for $C_5$.  

(2) Suppose $k\in \{2,4,6\}$ (resp. $k=8$), $G$ is a $\{P_5\}$-branch graph and $f$ is a $\gamma_k^s(G)$-function. Consider an arbitrary branch of $G$.
 If $f(c)\leq k/2$, then we should have $f(\ell_1)+f(\ell_2)\geq k$ and $f(r_1)+f(r_2)\geq k$. Else the following cases occur for the left side of the branch:
\begin{itemize}
\item $f(\ell_2)<k/2$. Then $f(\ell_1)+f(\ell_2)\geq k$.
\item $f(\ell_2)=k/2, f(\ell_1)<k/2$. Then $f(\ell_1)+f(c)\geq k$.
\item $f(\ell_2)\geq k/2, f(\ell_1)\geq k/2$. Then $f(\ell_1)+f(\ell_2)\geq k$.
\item $f(\ell_2)>k/2, f(\ell_1)<k/2$. Then $f(\ell_1)+f(\ell_2)+f(c)\geq k$.
\end{itemize}
Similar cases occur for the right side of the branch. So we may consider the following for the branch:
\begin{itemize}
\item[i.] $f(\ell_1)+f(\ell_2)\geq k$ and $f(r_1)+f(r_2)\geq k$.
\item[ii.] $f(\ell_1)+f(\ell_2)\geq k$ and $f(r_1)+f(c)\geq k$.
\item[iii.] $f(\ell_1)+f(\ell_2)\geq k$ and $f(r_1)+f(r_2)+f(c)\geq k$.
\item[iv.] $f(\ell_1)+f(c)\geq k$ and either $f(r_1)+f(c)\geq k$ or $f(r_1)+f(r_2)+f(c)\geq k$. This is the case when $f(\ell_2)=k/2$, $f(r_2)\geq k/2, f(\ell_1)<k/2, f(r_1)<k/2$. In view of Lemma \ref{2.4F} we may assume that $f(\ell_1)=f(r_1)=0$. So $f(c)\geq k$.
\item[v.] $f(\ell_1)+f(\ell_2)+f(c)\geq k$ and $f(r_1)+f(r_2)+f(c)\geq k$. This is the case when $f(\ell_1)<k/2, f(r_1)<k/2, f(\ell_2)>k/2, f(r_2)>k/2, f(c)>k/2$. Hence $f(\ell_2)+f(r_2)+f(c)\geq 3k/2+3\geq 2k$ (resp. $15$) since $k\leq 6$ (resp. $k=8$).
\end{itemize}
Therefore in all of the above cases we have $f(\ell_2)+f(\ell_1)+f(c)+f(r_1)+f(r_2)\geq 2k$ (resp. $15$). Since $G$ has $n/5$ branches, this shows $w(f)\geq 2kn/5$ (resp. $w(f)\geq 3n$). Now Corollary \ref{3.5} yields the result.

(3,4) Suppose $k$ is an even integer with $k\geq 8$ (resp. $k$ is an odd integer) and $G$ is a $\{P_4\}$-branch graph. Consider an arbitrary branch of $G$. For a $\gamma_k^s(G)$-function $f$, if $f(c)\leq k/2$, then $f(\ell_1)\geq k/2$ and $f(r_1)+f(r_2)\geq k$. Else $f(\ell_1)+f(c)\geq k$ and also for the right side of the branch we have $f(r_1)+f(r_2)\geq k/2$ (resp. $(k+1)/2$). Hence in each case $f(\ell_1)+f(c)+f(r_1)+f(r_2)\geq 3k/2$ (resp. $(3k+1)/2$). Since $G$ has $n/4$ branches, $w(f)\geq 3kn/8$ (resp. $(3k+1)n/8$). So the result holds by Corollary \ref{3.5}.

(5) Suppose that $G$ has $r$ branches $P_4$ and $r'$ branches $P_5$. Then $n=4r+5r'$. In view of the proof of Parts 2 and 3, for each $\gamma_8^s$-function $f$ of $G$, the weight of each branch $P_4$ is at least $12$ and the weight of each branch $P_5$ is at least $15$. Thus $w(f)\geq 12r+15r'=3(4r+5r')=3n$. Therefore Corollary \ref{3.5} completes the proof.
\end{proof}
Now we are ready to establish that the classes presented in Lemma \ref{3.6} are exactly those that achieve equality in Corollary \ref{3.5} for the strong Roman $k$-domination number. Setting $k=2$ and $k=3$ recovers \cite[Theorems 15 and 16]{double} and \cite[Theorems 2.2 and 2.3]{chambers} for Roman and double Roman domination numbers.
\begin{theorem}\label{3.7}
Assume that $G$ is a graph with $n\geq 3$ vertices. Suppose that $A_k=2k/5, \mathcal{B}_k=\{P_5\}$ for $k\in \{2,4,6\}$, $A_k=3k/8$ for other even integers $k$, $A_k=(3k+1)/8$ for odd integers $k$, $\mathcal{B}_8=\{P_4, P_5\}$ and $\mathcal{B}_k=\{P_4\}$ for other integers $k$. Then $\gamma_k^s(G)=A_kn$ if and only if
$$G= \left\lbrace \begin{array}{ll}
\mathcal{B}_k\text{-branch graph} &  \text{ when } k \text{ is odd or }k \text{ is even with } k> 8, \\
C_5 \text{ or } \mathcal{B}_k\text{-branch graph } &  \text{ when } k\in \{2,4,6,8\}.
\end{array}
\right.
$$
\end{theorem}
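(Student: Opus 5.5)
The sufficiency direction is already in hand: Lemma \ref{3.6} computes $\gamma_k^s$ exactly for $C_5$ (when $k\in\{2,4,6,8\}$) and for the relevant $\mathcal{B}_k$-branch graphs, and each value equals $A_kn$. For $C_5$ and $k=8$ we have $15=3\cdot 5=A_8\cdot 5$; for $k\in\{2,4,6\}$ the value $2k$ equals $(2k/5)\cdot 5$. So the work is necessity. Assume $\gamma_k^s(G)=A_kn$. For every spanning tree $T$ of $G$ we have $A_kn=\gamma_k^s(G)\le\gamma_k^s(T)\le A_kn$ by Theorem \ref{3.3}, so every spanning tree is extremal. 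The plan is to first characterize extremal trees, then lift the result to $G$.

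\textbf{Step 1 (trees).} I will rerun the induction of Theorem \ref{3.3} and track where equality can occur.
\begin{itemize}
\item Diameter $2$ or $3$ gives strict inequality, except that the double star $S_{1,1}=P_4$ attains $3k/2=A_k\cdot 4$ when $A_k=3k/8$, and attains $(3k+1)/2$ for odd $k$. Note $P_4$ is a one-branch $\{P_4\}$-graph with $H=K_1$.
\item Cases I and II give strict inequality.
\item In Case III, equality forces the spider to be $P_5$ when $A_k=2k/5$, or when $k=8$ (since $3k/2+3=15=3\cdot5$ exactly when $k=8$). I need to check that every other spider is strict; I would verify this from the explicit weights given there.
\item In Case IV, equality forces both $T'$ and $T''$ to be extremal, so $T'\in\mathcal{B}_k$ and, by induction, $T''$ is a $\mathcal{B}_k$-branch tree. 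The spider $T'=P_4$ or $P_5$ hangs from its center $u$ via the edge from $u$ to its parent $p\in T''$. Equality then forces extra structure. If $p$ were a non-root vertex of a branch of $T''$, I would build a cheaper SRDF by reassigning weights locally. For instance, if $p$ is a leaf of a $P_4$ branch, put $k$ on $u$ instead and save weight. This contradicts extremality, so $p$ lies in the core $H$ of $T''$ and $T$ is a $\mathcal{B}_k$-branch tree with core $H''+pu$.
\end{itemize}

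\textbf{Step 2 (lifting to $G$).} If $G$ is a cycle, I compute $\gamma_k^s(C_n)$ directly. Only $C_5$ with $k\in\{2,4,6,8\}$ attains the bound; $C_4$ and $C_3$ already contain spanning $P_4$ or $P_3$ and are strict by a quick computation. Otherwise, I fix a spanning tree $T$, which is a $\mathcal{B}_k$-branch tree by Step 1. I then show that every edge of $G$ not in $T$ joins two core vertices. If an extra edge $e$ touched a non-core vertex, I would exhibit another spanning tree $T+e-e'$, with $e'$ a suitable tree edge inside a branch, that is not a $\mathcal{B}_k$-branch tree; this contradicts Step 1. Alternatively, I would give a direct cheaper SRDF using $e$, for example a leaf of a $P_4$ adjacent to another branch's center lets us drop the $k/2$ on it. Once all extra edges lie in the core, $G=H(\mathcal{K})$ with a connected core, i.e.\ $G$ is a $\mathcal{B}_k$-branch graph.

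\textbf{Main obstacle.} The hard part is the case analysis in Step 1, Case IV and in Step 2: showing that attaching a branch at a non-root vertex, or adding a non-core edge, strictly decreases $\gamma_k^s$. This must be checked separately for each parity class of $k$. For $k=8$, mixed $P_4/P_5$ attachments also need checking, since a $P_5$ leaf adjacent to a $P_4$ vertex can create subtle savings. I would handle it by listing, for each branch type and each vertex position, an explicit local reassignment of the $\gamma_k^s$-weights from Lemma \ref{3.6} that saves at least $1$.
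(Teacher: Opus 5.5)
Your strategy is the paper's own. You track equality through the induction of Theorem~\ref{3.3} to show every extremal tree is a $\mathcal{B}_k$-branch tree, then lift to $G$ by exchanging edges between spanning trees. Your separate treatment of cycles even catches $C_4$, which the paper's one-line lifting argument overlooks.

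\textbf{The gap: Step~2 cannot be completed.} The ``only if'' direction is false for odd $k$ and for even $k\geq 8$. Let $G=C_4\circ K_1$: a cycle $s_1s_2s_3s_4$ with a pendant leaf $\ell_i$ at each $s_i$.
\begin{itemize}
\item $G$ is not a cycle and not $C_5$.
\item $G$ is not a $\mathcal{B}_k$-branch graph: it has $8$ vertices and $8$ edges, while a $\mathcal{B}_k$-branch graph on $8$ vertices is two $P_4$'s joined by one edge, hence a tree.
\item Every spanning tree of $G$ is a $\{P_4\}$-branch tree. Deleting $s_1s_2$ leaves the paths $\ell_3s_3s_2\ell_2$ and $\ell_4s_4s_1\ell_1$ joined at their centres $s_3,s_4$; the other deletions are symmetric.
\end{itemize}
So the non-tree edge joins two non-core vertices, yet no exchange $T+e-e'$ leaves the class. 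No cheaper SRDF exists either, because $\gamma_k^s(G)=2k+2\lceil k/2\rceil=8A_k$.

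For the upper bound, put $k$ on $s_1,s_3$ and $\lceil k/2\rceil$ on $\ell_2,\ell_4$. For the lower bound, take any $k$-SRDF $f$. Each pair $\{\ell_i,s_i\}$ carries at least $\lceil k/2\rceil$, and at least $k$ if $f(s_i)>k/2$. It also carries at least $k$ if none of $s_{i-1},s_i,s_{i+1}$ exceeds $k/2$. Indeed, $\ell_i$ then forces $f(\ell_i)\ge k/2$, and either $f(s_i)=k/2$ or $s_i$ forces $f(s_i)+f(\ell_i)\ge k$. Now count the $s_i$ exceeding $k/2$: none, exactly one (say $s_1$, so pairs $1$ and $3$ carry $\ge k$), or at least two. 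In every case the total is at least $2k+2\lceil k/2\rceil$. This equals $3k=8A_k$ for even $k\ge 8$ and $3k+1=8A_k$ for odd $k$; for $k=3$ it gives $\gamma_3^s(C_4\circ K_1)=10=5n/4$.

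The paper's sentence ``adding any edge apart from joining roots \dots\ induces another spanning tree that is not a $\mathcal{B}_k$-branch tree'' fails at exactly this graph. The statement must be amended to include $C_4\circ K_1$. The lifting step must also handle separately, by direct computation, the unicyclic configurations in which every spanning tree stays in the class ($C_4$, $C_5$ and $C_4\circ K_1$ among them).

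\textbf{A smaller issue in Step~1.} Suppose the parent $p$ of $u$ is the non-root centre $r_1$ of a $P_4$ branch $B=\ell_1cr_1r_2$ of $T''$. Then no reassignment on $V(B)\cup V(T')$ alone saves anything, since $B+T'+ur_1$ is itself a $\{P_4\}$-branch tree rooted at $r_1$ and $u$. When $T''=B$ you must re-root instead. Otherwise you must use a core neighbour $c'$ of $c$:
\begin{itemize}
\item give $c'$ weight $k$;
\item put $\lceil k/2\rceil$ on $\ell_1$ and $r_2$, and $0$ on $c$ and $r_1$;
\item give every other branch its self-contained optimal weights.
\end{itemize}
This saves $\lfloor k/2\rfloor$ on $B$ at an extra cost of at most $2$, incurred only when $k=8$ and $P_5$ branches are involved. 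So it is strictly cheaper.
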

\begin{proof}
In view of Lemma \ref{3.6}, it suffices to prove the \textit{only if} part. To this aim we claim each tree $T$ of order $n\geq 3$  with $\gamma_k^s(T)=A_kn$  is a $\mathcal{B}_k$-branch tree. So if $G$ is a graph of order $n\geq 3$ with $\gamma_k^s(G)=A_kn$, then by Theorem \ref{3.3} and the fact that adding edges can't increase the value of strong Roman $k$-domination number,  the equality also holds for each spanning tree of $G$.  Thus our claim shows each spanning tree $T_G$ of $G$ is a $\mathcal{B}_k$-branch tree. Now if one add any edge to a spanning tree $T_G$ of $G$, one obtains either $C_5$ or a $\mathcal{B}_k$-branch graph, because adding any edge apart from joining roots of branches of $T_G$ induces another spanning tree that is not a $\mathcal{B}_k$-branch tree, a contradiction. To prove our claim by notations as in the proofs of Theorem \ref{3.3} and Lemma \ref{3.6} the equality holds in the following statements:

\textbf{Case A.}  $k\in\{2,4,6\}$. The equality holds when $T=P_5$ or in Case IV when $T'=P_5$ with $V(T')=\{w_1,v_1, u , v_0, w_0\}, E(T')=\{w_1v_1, v_1u, uv_0, v_0w_0\}$  and $T''$ has $n-5\geq 3$ vertices such that for every $k$-SRDF $f''$ for $T''$ and $f'$ for $T'$, we have $w(f'')=2k(n-5)/5$ and $w(f')=2k$ and the map $f$ defined by $f(x)=f'(x)$ for $x\in V(T')$ and $f(x)=f''(x)$ for $x\in V(T'')$ is a $\gamma_k^s(T)$-function. Thus $\gamma_k^s(T'')=2k(n-5)/5$. By induction on $n$, $T''$ is a $\{P_5\}$-branch tree. Assume $f''$ is a $\gamma_k^s(T'')$-function and $f'$ is the $\gamma_k^s(T')$-function with $f'(u)=k,f'(w_0)=f'(w_1)=k/2,f'(v_0)=f'(v_1)=0$.  Then the map $f$ defined by $f'$ on $V(T')$ and $f''$ on $V(T'')$ is a $\gamma_k^s(T)$-function and Lemma \ref{3.6} shows that the weight of $f''$ on any branch of $T''$ is at least $2k$. Now suppose that the branch $T'$ of $T$ is joined to a branch of $T''$ as in Figure \ref{fig5}. Assigning weights as shown in Figure \ref{fig5} leads to a $k$-SRDF with weight less than $w(f)$, a contradiction. So, $u$ must be adjacent to a center of a branch in $T''$. This shows $T$ is also a $\{P_5\}$-branch tree.

\begin{figure}[htbp]
\centering
\begin{tikzpicture}
[scale=0.6]
\filldraw[fill=red] (0,0) circle (2pt) node[below]{$c$};
\filldraw[fill=red] (1,1) circle (2pt) node[below]{$r_1$};
\filldraw[fill=red] (2,2) circle (2pt) node[below]{$r_2$};
\filldraw[fill=red] (-1,1) circle (2pt) node[below]{$\ell_1$};
\filldraw[fill=red] (-2,2) circle (2pt) node[below]{$\ell_2$};
\filldraw[fill=red] (0,2) circle (2pt) node[below]{$u$};
\filldraw[fill=red] (1,3) circle (2pt) node[below]{$v_0$};
\filldraw[fill=red] (2,4) circle (2pt) node[below]{$w_0$};
\filldraw[fill=red] (-1,3) circle (2pt) node[below]{$v_1$};
\filldraw[fill=red] (-2,4) circle (2pt) node[below]{$w_1$};

\filldraw[fill=red] (0,-0.5) node[blue,below]{{\tiny $0$}};
\filldraw[fill=red] (1,0.5) node[blue,below]{$\frac{k}{2}$};
\filldraw[fill=red] (2,1.5) node[blue,below]{{\tiny $0$}};
\filldraw[fill=red] (-1,0.3) node[blue,below]{{\tiny $k$}};
\filldraw[fill=red] (-2,1.3)  node[blue,below]{{\tiny $0$}};
\filldraw[fill=red] (0,2) node[blue,above]{{\tiny $k$}};
\filldraw[fill=red] (1,3) node[blue,above]{{\tiny $0$}};
\filldraw[fill=red] (2,4)  node[blue,above]{$\frac{k}{2}$};
\filldraw[fill=red] (-1,3)  node[blue,above]{{\tiny $0$}};
\filldraw[fill=red] (-2,4)  node[blue,above]{$\frac{k}{2}$};

\draw (0,0) -- (1,1);
\draw (1,1) -- (2,2);
\draw (0,0) -- (-1,1);
\draw (-1,1) -- (-2,2);
\draw (0,2) -- (1,3);
\draw (1,3) -- (2,4);
\draw (0,2) -- (-1,3);
\draw (-1,3) -- (-2,4);

\draw (0,2) -- (2,2);
\end{tikzpicture}
\hspace{1cm}
\begin{tikzpicture}
[scale=0.6]
\filldraw[fill=red] (0,0) circle (2pt) node[below]{$c$};
\filldraw[fill=red] (1,1) circle (2pt) node[below]{$r_1$};
\filldraw[fill=red] (2,2) circle (2pt) node[below]{$r_2$};
\filldraw[fill=red] (-1,1) circle (2pt) node[below]{$\ell_1$};
\filldraw[fill=red] (-2,2) circle (2pt) node[below]{$\ell_2$};
\filldraw[fill=red] (0,2) circle (2pt) node[below]{$u$};
\filldraw[fill=red] (1,3) circle (2pt) node[below]{$v_0$};
\filldraw[fill=red] (2,4) circle (2pt) node[below]{$w_0$};
\filldraw[fill=red] (-1,3) circle (2pt) node[below]{$v_1$};
\filldraw[fill=red] (-2,4) circle (2pt) node[below]{$w_1$};

\filldraw[fill=red] (0,-0.5) node[blue,below]{{\tiny $0$}};
\filldraw[fill=red] (1,0.5) node[blue,below]{{\tiny $0$}};
\filldraw[fill=red] (2,1.5) node[blue,below]{$\frac{k}{2}$};
\filldraw[fill=red] (-1,0.3) node[blue,below]{{\tiny $k$}};
\filldraw[fill=red] (-2,1.3)  node[blue,below]{{\tiny $0$}};
\filldraw[fill=red] (0,2) node[blue,above]{{\tiny $k$}};
\filldraw[fill=red] (1,3) node[blue,above]{{\tiny $0$}};
\filldraw[fill=red] (2,4)  node[blue,above]{$\frac{k}{2}$};
\filldraw[fill=red] (-1,3)  node[blue,above]{{\tiny $0$}};
\filldraw[fill=red] (-2,4)  node[blue,above]{$\frac{k}{2}$};

\draw (0,0) -- (1,1);
\draw (1,1) -- (2,2);
\draw (0,0) -- (-1,1);
\draw (-1,1) -- (-2,2);
\draw (0,2) -- (1,3);
\draw (1,3) -- (2,4);
\draw (0,2) -- (-1,3);
\draw (-1,3) -- (-2,4);

\draw (0,2) -- (1,1);
\end{tikzpicture}

\caption{Proof of Theorem \ref{3.7} (Cases A and C)}
\label{fig5}
\end{figure}
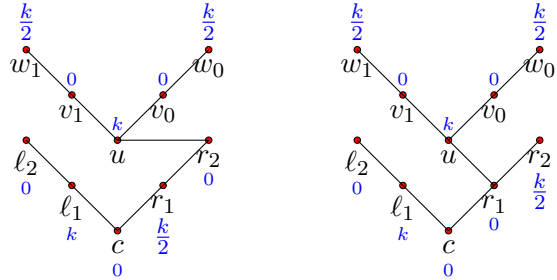

\begin{figure}[htbp]
\centering
\begin{tikzpicture}
[scale=0.6]
\filldraw[fill=red] (0,0) circle (2pt) node[below]{$c$};
\filldraw[fill=red] (1,1) circle (2pt) node[below]{$r_1$};
\filldraw[fill=red] (2,2) circle (2pt) node[below]{$r_2$};
\filldraw[fill=red] (-1,1) circle (2pt) node[below]{$\ell_1$};
\filldraw[fill=red] (0,2) circle (2pt) node[below]{$u$};
\filldraw[fill=red] (1,3) circle (2pt) node[below]{$v_0$};
\filldraw[fill=red] (2,4) circle (2pt) node[below]{$w_0$};
\filldraw[fill=red] (-1,3) circle (2pt) node[below]{$u_1$};

\filldraw[fill=red] (0,-0.5) node[blue,below]{{\tiny $\frac{k}{2}+1$}};
\filldraw[fill=red] (1,0.5) node[blue,below]{{\tiny $0$}};
\filldraw[fill=red] (2,1.5) node[blue,below]{{\tiny $\frac{k}{2}+1$}};
\filldraw[fill=red] (-1,0.3) node[blue,below]{{\tiny $0$}};
\filldraw[fill=red] (0,2) node[blue,above]{{\tiny $k$}};
\filldraw[fill=red] (1,3) node[blue,above]{{\tiny $0$}};
\filldraw[fill=red] (2,4)  node[blue,above]{{\tiny $\frac{k}{2}$}};
\filldraw[fill=red] (-1,3)  node[blue,above]{{\tiny $0$}};

\draw (0,0) -- (1,1);
\draw (1,1) -- (2,2);
\draw (0,0) -- (-1,1);
\draw (0,2) -- (1,3);
\draw (1,3) -- (2,4);
\draw (0,2) -- (-1,3);

\draw (0,2) -- (-1,1);
\end{tikzpicture}
\hspace{2cm}
\begin{tikzpicture}
[scale=0.6]
\filldraw[fill=red] (0,0) circle (2pt) node[below]{$c$};
\filldraw[fill=red] (1,1) circle (2pt) node[below]{$r_1$};
\filldraw[fill=red] (2,2) circle (2pt) node[below]{$r_2$};
\filldraw[fill=red] (-1,1) circle (2pt) node[below]{$\ell_1$};
\filldraw[fill=red] (0,2) circle (2pt) node[below]{$u$};
\filldraw[fill=red] (1,3) circle (2pt) node[below]{$v_0$};
\filldraw[fill=red] (2,4) circle (2pt) node[below]{$w_0$};
\filldraw[fill=red] (-1,3) circle (2pt) node[below]{$u_1$};

\filldraw[fill=red] (0,-0.5) node[blue,below]{{\tiny $\frac{k+1}{2}$}};
\filldraw[fill=red] (1,0.5) node[blue,below]{{\tiny $0$}};
\filldraw[fill=red] (2,1.5) node[blue,below]{{\tiny $\frac{k+1}{2}$}};
\filldraw[fill=red] (-1,0.3) node[blue,below]{{\tiny $0$}};
\filldraw[fill=red] (0,2) node[blue,above]{{\tiny $k$}};
\filldraw[fill=red] (1,3) node[blue,above]{{\tiny $0$}};
\filldraw[fill=red] (2,4)  node[blue,above]{{\tiny $\frac{k+1}{2}$}};
\filldraw[fill=red] (-1,3)  node[blue,above]{{\tiny $0$}};

\draw (0,0) -- (1,1);
\draw (1,1) -- (2,2);
\draw (0,0) -- (-1,1);
\draw (0,2) -- (1,3);
\draw (1,3) -- (2,4);
\draw (0,2) -- (-1,3);

\draw (0,2) -- (-1,1);
\end{tikzpicture}

{\hspace{-15mm}($k>8$ and $k$ is even) \hspace{1.5cm} ($k$ is odd)}

\caption{Proof of Theorem \ref{3.7} (Cases B and C)}
\label{fig6}
\end{figure}

\begin{figure}[htbp]
\centering
\begin{tikzpicture}
[scale=0.6]
\filldraw[fill=red] (0,0) circle (2pt) node[below]{$c$};
\filldraw[fill=red] (1,1) circle (2pt) node[below]{$r_1$};
\filldraw[fill=red] (2,2) circle (2pt) node[below]{$r_2$};
\filldraw[fill=red] (-1,1) circle (2pt) node[below]{$\ell_1$};
\filldraw[fill=red] (-2,2) circle (2pt) node[below]{$\ell_2$};
\filldraw[fill=red] (0,2) circle (2pt) node[below]{$u$};
\filldraw[fill=red] (1,3) circle (2pt) node[below]{$v_0$};
\filldraw[fill=red] (2,4) circle (2pt) node[below]{$w_0$};
\filldraw[fill=red] (-1,3) circle (2pt) node[below]{$u_1$};

\filldraw[fill=red] (0,-0.3) node[blue,below]{{\tiny $\frac{k}{2}+1$}};
\filldraw[fill=red] (1,0.5) node[blue,below]{{\tiny $0$}};
\filldraw[fill=red] (2,1.5) node[blue,below]{{\tiny $\frac{k}{2}+1$}};
\filldraw[fill=red] (-1,0.4) node[blue,below]{{\tiny $0$}};
\filldraw[fill=red] (-2,1.4) node[blue,below]{{\tiny $\frac{k}{2}$}};
\filldraw[fill=red] (0,2) node[blue,above]{{\tiny $k$}};
\filldraw[fill=red] (1,3) node[blue,above]{{\tiny $0$}};
\filldraw[fill=red] (2,4)  node[blue,above]{{\tiny $\frac{k}{2}$}};
\filldraw[fill=red] (-1,3)  node[blue,above]{{\tiny $0$}};

\draw (0,0) -- (1,1);
\draw (1,1) -- (2,2);
\draw (0,0) -- (-1,1);
\draw (-1,1) -- (-2, 2);
\draw (0,2) -- (1,3);
\draw (1,3) -- (2,4);
\draw (0,2) -- (-1,3);

\draw (0,2) -- (-1,1);
\end{tikzpicture}
\hspace{2cm}
\begin{tikzpicture}
[scale=0.6]
\filldraw[fill=red] (0,0) circle (2pt) node[below]{$c$};
\filldraw[fill=red] (1,1) circle (2pt) node[below]{$r_1$};
\filldraw[fill=red] (2,2) circle (2pt) node[below]{$r_2$};
\filldraw[fill=red] (-1,1) circle (2pt) node[below]{$\ell_1$};
\filldraw[fill=red] (-2,2) circle (2pt) node[below]{$\ell_2$};
\filldraw[fill=red] (0,2) circle (2pt) node[below]{$u$};
\filldraw[fill=red] (1,3) circle (2pt) node[below]{$v_0$};
\filldraw[fill=red] (2,4) circle (2pt) node[below]{$w_0$};
\filldraw[fill=red] (-1,3) circle (2pt) node[below]{$u_1$};

\filldraw[fill=red] (0,-0.3) node[blue,below]{{\tiny $\frac{k}{2}+1$}};
\filldraw[fill=red] (1,0.5) node[blue,below]{{\tiny $0$}};
\filldraw[fill=red] (2,1.5) node[blue,below]{{\tiny $\frac{k}{2}+1$}};
\filldraw[fill=red] (-1,0.4) node[blue,below]{{\tiny $\frac{k}{2}$}};
\filldraw[fill=red] (-2,1.4) node[blue,below]{{\tiny $0$}};
\filldraw[fill=red] (0,2) node[blue,above]{{\tiny $k$}};
\filldraw[fill=red] (1,3) node[blue,above]{{\tiny $0$}};
\filldraw[fill=red] (2,4)  node[blue,above]{{\tiny $\frac{k}{2}$}};
\filldraw[fill=red] (-1,3)  node[blue,above]{{\tiny $0$}};

\draw (0,0) -- (1,1);
\draw (1,1) -- (2,2);
\draw (0,0) -- (-1,1);
\draw (-1,1) -- (-2,2);
\draw (0,2) -- (1,3);
\draw (1,3) -- (2,4);
\draw (0,2) -- (-1,3);

\draw (0,2) -- (-2,2);
\end{tikzpicture}
\hspace{2cm}
\begin{tikzpicture}
[scale=0.6]
\filldraw[fill=red] (0,0) circle (2pt) node[below]{$c$};
\filldraw[fill=red] (1,1) circle (2pt) node[below]{$r_1$};
\filldraw[fill=red] (2,2) circle (2pt) node[below]{$r_2$};
\filldraw[fill=red] (-1,1) circle (2pt) node[below]{$\ell_1$};
\filldraw[fill=red] (0,2) circle (2pt) node[below]{$u$};
\filldraw[fill=red] (1,3) circle (2pt) node[below]{$v_0$};
\filldraw[fill=red] (2,4) circle (2pt) node[below]{$w_0$};
\filldraw[fill=red] (-1,3) circle (2pt) node[below]{$v_1$};
\filldraw[fill=red] (-2,4) circle (2pt) node[below]{$w_1$};

\filldraw[fill=red] (0,-0.3) node[blue,below]{{\tiny $\frac{k}{2}+1$}};
\filldraw[fill=red] (1,0.5) node[blue,below]{{\tiny $0$}};
\filldraw[fill=red] (2,1.5) node[blue,below]{{\tiny $\frac{k}{2}+1$}};
\filldraw[fill=red] (-1,0.4) node[blue,below]{{\tiny $0$}};
\filldraw[fill=red] (0,2) node[blue,above]{{\tiny $k$}};
\filldraw[fill=red] (1,3) node[blue,above]{{\tiny $0$}};
\filldraw[fill=red] (2,4)  node[blue,above]{{\tiny $\frac{k}{2}$}};
\filldraw[fill=red] (-1,3)  node[blue,above]{{\tiny $0$}};
\filldraw[fill=red] (-2,4)  node[blue,above]{{\tiny $\frac{k}{2}$}};

\draw (0,0) -- (1,1);
\draw (1,1) -- (2,2);
\draw (0,0) -- (-1,1);
\draw (0,2) -- (1,3);
\draw (1,3) -- (2,4);
\draw (0,2) -- (-1,3);
\draw (-1,3) -- (-2,4);

\draw (0,2) -- (-1,1);
\end{tikzpicture}

\caption{Proof of Theorem \ref{3.7} (Case C)}
\label{fig7}
\end{figure}
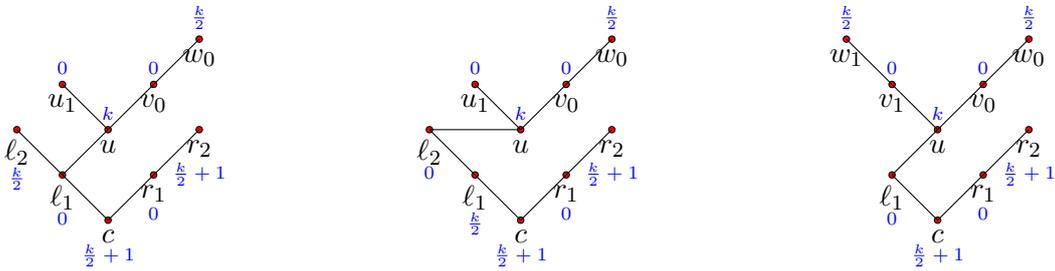

\textbf{Case B. } $k$ is even and $k>8$ (resp. $k$ is odd).  The equality holds when $T=P_4$ or in Case IV when  $T'=P_4$ with $V(T')=\{u_1, u , v_0, w_0\}, E(T')=\{u_1u, uv_0, v_0w_0\}$  and $T''$ has $n-4\geq 3$ vertices such that the equalities in proof of Theorem \ref{3.3} hold. The proof is similar to that of Case A (Figure \ref{fig6} shows that if $u$ is not adjacent to the center of a branch of $T''$, then the weight of $f''$ on that branch is less than $3k/2$ (resp. $(3k+1)/2$) which is a contradiction).

\textbf{Case C.}  $k=8$. The equality holds when $G=P_4$ or $G=P_5$ or in Case IV when for $r\in \{4,5\}$,  $T'=P_r$ and $T''$ has $n-r\geq 3$ vertices such that the equalities in proof of Theorem \ref{3.3}  hold. The proof is similar to that of Case A (Figures \ref{fig5}, \ref{fig6} and \ref{fig7} show that if $u$ is not adjacent to the center of a branch in $T''$, then the weight of $f''$ on that branch, in the form of $P_4$, is less than $3k/2$  and in the form of $P_5$, is less than $2k$, a contradiction).
\end{proof}

We conclude by presenting a class of graphs whose Roman $k$-domination number precisely matches the bound in Corollary \ref{3.5} generalizing the extremal parts of Theorems 1 and 12 in \cite{Klos}.
\begin{theorem}\label{3.9}
Let $n\geq 3$ and $G$ be a graph with $n$ vertices. Set $A_k=3k/8$ when $k$ is even and $A_k=(3k+1)/8$ when $k$ is odd. Then $\gamma_k(G)=A_kn$ if and only if $G$ is a $\{P_4\}$-branch graph.
\end{theorem}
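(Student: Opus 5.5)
We prove the two directions separately, following the pattern of Lemma \ref{3.6} and Theorem \ref{3.7}.

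\emph{If part.} Let $G$ be a $\{P_4\}$-branch graph, so $n=4m$ with $m$ branches. Corollary \ref{3.5} gives $\gamma_k(G)\le A_kn$, so it suffices to show that every $k$-RDF $f$ has weight at least $3k/2$ (resp. $(3k+1)/2$) on each branch. Label a branch with vertices $\ell_1,c,r_1,r_2$ as in Lemma \ref{3.6}, where only $c$ has neighbours outside the branch. The leaf $\ell_1$ and the vertices $r_1,r_2$ have closed neighbourhoods inside the branch, so the Roman condition only involves branch weights. We split on $f(\ell_1)$ and $f(r_2)$:
\begin{itemize}
\item If $f(\ell_1)<k/2$, then $f(\ell_1)+f(c)\ge k$. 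Otherwise $f(\ell_1)\ge \lceil k/2\rceil$.
\item If $f(r_2)<k/2$, then $f(r_1)+f(r_2)\ge k$. Otherwise $f(r_2)\ge\lceil k/2\rceil$.
\end{itemize}
This leaves four subcases; the delicate one is $f(\ell_1)\ge k/2$ and $f(r_2)\ge k/2$, where we must also use the condition at $r_1$ when $f(r_1)<k/2$. If $f(r_1)\ge k/2$, the sum is at least $3\lceil k/2\rceil$. In every subcase the branch weight is at least $3\lceil k/2\rceil$ or $k+\lceil k/2\rceil$, which is at least $3k/2$, resp. $(3k+1)/2$. Note that for odd $k$, $f(\ell_1)<k/2$ means $f(\ell_1)\le (k-1)/2$, and the thresholds $\lceil k/2\rceil=(k+1)/2$ supply the extra $1/2$. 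Summing over the branches gives $w(f)\ge A_kn$.

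\emph{Only if part.} As in the proof of Theorem \ref{3.7}, first prove the claim for trees: every tree $T$ with $\gamma_k(T)=A_kn$ is a $\{P_4\}$-branch tree. Then pass to $G$ via spanning trees. Every spanning tree of $G$ attains equality, since adding edges cannot increase $\gamma_k$ and Theorem \ref{3.3} gives the reverse inequality. Any edge of $G$ not joining two branch roots yields, by edge exchange, a spanning tree that is not a $\{P_4\}$-branch tree; this is the argument already used in Theorem \ref{3.7}. Unlike the strong case, $C_5$ does not occur: $\gamma_k(C_5)<5A_k$, witnessed for even $k$ by weights $k/2$ on $u_1,u_3,u_4$ (with $u_5$ checked) or a similar explicit assignment.

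For the tree claim, go through the cases of the proof of Theorem \ref{3.3} and track where equality can hold.
\begin{itemize}
\item Diameter $\le 3$: only $P_4$ attains equality.
\item Cases I and II: strict inequality.
\item Case III (spiders): one checks the spider gives strict inequality; this includes $P_5$, where $\gamma_k=3k/2<5A_k$ for even $k$, with a similar check for odd $k$.
\item Case IV: equality forces $T'$ to be an extremal spider, hence $T'=P_4$ with $u$ adjacent to a leaf $u_1$, and forces $\gamma_k(T'')=A_k(n-4)$.
\end{itemize}
By induction $T''$ is a $\{P_4\}$-branch tree. Lemma \ref{3.2} can help force $T'$: for even $k$, a minimal $f$ with $f(u)<k/2$ would contradict Lemma \ref{3.2}(1).

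\emph{Main obstacle.} The final step is to show that $u$ is attached to the center $c$ of a branch of $T''$. If $u$ were adjacent to $\ell_1$, $r_1$ or $r_2$, we would exhibit an explicit $k$-RDF on $T'\cup(\text{that branch})$ of weight below $2\cdot 3k/2$ (resp. $(3k+1)$), contradicting equality, in the style of Figure \ref{fig6}. For example, put $k$ on $u$, which then covers the adjacent branch vertex, and lower the branch weight to about $k$ or $k/2+$ small. The parity-dependent verification of these local reassignments, especially for odd $k$ where the halves $(k\pm1)/2$ matter, is where I expect the real work, together with verifying that no spider other than $P_4$ is extremal.
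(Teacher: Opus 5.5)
Your ``if'' direction is correct and is essentially the paper's argument. The paper splits on whether $f(c)\ge k/2$, while you split on $f(\ell_1)$ and $f(r_2)$; both give weight at least $k+\lceil k/2\rceil=4A_k$ on every branch. The ``only if'' direction follows the paper's route (the paper's proof is just ``similar to Theorem~\ref{3.7}''). However, two of its steps fail as you state them, and in both cases the cause is that $P_4$ has two centers.

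\textbf{The passage from trees to graphs.} You claim that an edge not joining two roots always yields, by edge exchange, a spanning tree that is not a $\{P_4\}$-branch tree. This is false.

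\emph{Counterexample.} Take $G=C_4\circ K_1$: a $4$-cycle $abcd$ with leaves $a',b',c',d'$. Deleting $ab$ leaves the $\{P_4\}$-branch tree with $H$-edge $cd$ and branches $c'cbb'$, $d'daa'$, rooted at the centers $c$ and $d$. By symmetry, all four spanning trees are $\{P_4\}$-branch trees. Yet $G$ has $8$ edges, whereas an $8$-vertex $\{P_4\}$-branch graph has at most $7$.

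\emph{For $k\in\{2,3\}$ this graph attains the bound.} Each pair $\{x,x'\}$ carries weight at least $\lceil k/2\rceil$, with equality only if $f(x)=0$ and $f(x')=\lceil k/2\rceil$. Since $8A_k-4\lceil k/2\rceil=2$, weight below $8A_k$ forces three tight pairs. The tight cycle vertex whose two cycle neighbours are also tight then sees only $\lceil k/2\rceil<k$ in its closed neighbourhood. Conversely, $\lceil k/2\rceil$ on the leaves and $1$ on $a,c$ is a $k$-RDF of weight $8A_k$. So the statement is false for $k=2,3$, and no argument of this kind can prove it there.

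\emph{For $k\ge 4$.} Here $C_4\circ K_1$ does fall below the bound; for $k=4$, weight $2$ on the leaves and $1,1,1,0$ around the cycle gives $11<12$. But your spanning-tree argument cannot detect this. You would have to compute such graphs directly, after determining which graphs have only $\{P_4\}$-branch spanning trees. The paper's one-line proof rests on the same assertion from the proof of Theorem~\ref{3.7}, so following it more closely will not repair this.

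\textbf{The attachment step for trees.} Let $B=\ell_1cr_1r_2$ be the branch of $T''$ met by $u$.

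\emph{Why the planned contradiction fails.} You plan to exclude $u\sim r_1$ by a cheaper function on $V(T')\cup V(B)$. No such function exists. Since $r_1$ is the other center of $B$, the subgraph $T[V(T')\cup V(B)]$ is itself the $\{P_4\}$-branch tree with branches $u_1uv_0w_0$ and $r_2r_1c\ell_1$, rooted at $u$ and $r_1$. By your own ``if'' part, every $k$-RDF of it has weight at least $8A_k$.

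\emph{What actually happens.} If $T''=B$, the configuration is genuinely extremal, and you should re-root $B$ at $r_1$. If instead $c$ has a neighbour $c'$ in another branch, the contradiction must use $c'$. Give every other branch $k$ on its root and $\lceil k/2\rceil$ on its far leaf. On $T'\cup B$, put $\lceil k/2\rceil$ on $\ell_1,r_2,u_1,w_0$ and $\lfloor k/2\rfloor$ on $u$. This is a $k$-RDF, with $c$ dominated by $\ell_1$ and $c'$, and its weight on $T'\cup B$ is $4\lceil k/2\rceil+\lfloor k/2\rfloor<8A_k$.

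\emph{The $\ell_1$ and $r_2$ cases.} Your local reassignment works there: $k$ on $u$, $\lceil k/2\rceil$ on $w_0$, and $k$ on $r_1$ (resp.\ $c$). But you need the same choice of $\gamma_k(T'')$-function on the other branches. With an arbitrary one, lowering $f(c)$ can leave undominated those roots of other branches that depended on $c$.
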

\begin{proof}
$(\Longleftarrow )$ Suppose that $f$ is a $\gamma_k(G)$-function. In view of Corollary \ref{3.5} it suffices to prove $w(f)\geq A_kn$. To this end, we show that the weight of $f$ on each branch of $G$ is at least $4A_k$.  Consider $P_4$ as 
$$V(P_4)=\{\ell_1, c, r_1, r_2\}, E(P_4)=\{\ell_1c, cr_1, r_1r_2\}.$$

If $f(c)\geq k/2$, then $f(\ell_1)+f(c)\geq k$ and also either $f(r_2)\geq k/2$ or $f(r_1)+f(r_2)\geq k$. Hence the weight of $f$ on the branch is at least $4A_k$. Else $f(c)<k/2$. Thus $f(\ell_1)\geq k/2$. In addition, it can be checked $f(r_1)+f(r_2)+f(c)\geq k$ which implies that the weight of $f$ on the branch is at least $4A_k$ in this case too.

$(\Longrightarrow )$  Similar argument to proof of Theorem \ref{3.7} yields the result. 
\end{proof}

\begin{remark}
This work unifies and extends sharp upper bounds for Roman and strong Roman $k$-domination numbers, generalizing prior results for $k=2$ and $k=3$ to arbitrary $k \geq 2$, with new insights for $k \geq 4$ and precise characterizations of extremal graphs. The unification of diverse domination variants (e.g., perfect, weak, and etc.) remains an open challenge, offering a promising direction for further research into generalized domination numbers across arbitrary integers $k$.
\end{remark}

\textbf{Acknowledgment.} The author sincerely thanks the referees for their valuable and thoughtful comments, which substantially improved the exposition and presentation of this note.

\end{document}